\date{\today}
\theoremstyle{theorem}
    \newtheorem{theorem}{Theorem}
    \newtheorem{lemma}[theorem]{Lemma}
\theoremstyle{definition} 
    \newtheorem{definition}[theorem]{Definition}
    \newtheorem{result}[theorem]{Result}
    \newtheorem{remark}[theorem]{Remark}
    \newtheorem{example}[theorem]{Example}
    \newtheorem{exercise}[theorem]{Exercise}
\def\suchthat{\; : \;}
\def\tends{\rightarrow}
\def\l{\left}
\def\r{\right}
\def\<{\langle}
\def\>{\rangle}
\newcommand{\E}{\mbox{E}}
\newcommand\Tr{{\mbox{Tr}}}
\newcommand\mnote[1]{} 
\newcommand\be{\begin{equation*}}
\newcommand\ee{\end{equation*}}
\newcommand\ben{\begin{equation}}
\newcommand\een{\end{equation}}
\newcommand\bes{\begin{eqnarray*}}
\newcommand\ees{\end{eqnarray*}}
\newcommand\bex{\begin{exercise}}
\newcommand\eex{\end{exercise}}
\newcommand\beg{\begin{example}}
\newcommand\eeg{\end{example}}
\newcommand\benu{\begin{enumerate}}
\newcommand\eenu{\end{enumerate}}
\newcommand\beit{\begin{itemize}}
\newcommand\eeit{\end{itemize}}
\newcommand\berk{\begin{remark}}
\newcommand\eerk{\end{remark}}
\newcommand\bdefn{\begin{defintion}}
\newcommand\edefn{\end{definition}}
\newcommand\bthm{\begin{theorem}}
\newcommand\ethm{\end{theorem}}
\newcommand\bprf{\begin{proof}}
\newcommand\eprf{\end{proof}}
\newcommand\blem{\begin{lemma}}
\newcommand\elem{\end{lemma}}
\newcommand{\Cov}{\mbox{\rm Cov}}
\newcommand{\sm}{{\raise0.3ex\hbox{$\scriptstyle \setminus$}}}
\def\l{\left}
\def\r{\right}
\def\tends{\rightarrow}
\def\CHI{\mathchoice%
{\raise2pt\hbox{$\chi$}}%
{\raise2pt\hbox{$\chi$}}%
{\raise1.3pt\hbox{$\scriptstyle\chi$}}%
{\raise0.8pt\hbox{$\scriptscriptstyle\chi$}}}
\def\smalloplus{\raise1pt\hbox{$\,\scriptstyle \oplus\;$}}
\title[linear eigenvalue statistics of symmetric circulant matrices]{Fluctuation of eigenvalues of symmetric circulant matrices with  independent entries} 
\author{Shambhu Nath Maurya}
\address{Department of Mathematics\\
	Indian Institute of Technology Bombay\\
	Powai, Mumbai, Maharashtra 400076, India}
\email{snmaurya [at] math.iitb.ac.in}
\author{Koushik Saha}
\address{Department of Mathematics\\
        Indian Institute of Technology Bombay\\
         Powai, Mumbai, Maharashtra 400076, India}
\email{koushik.saha [at] iitb.ac.in}
\date{\today}
\thanks{The work of Shambhu Nath Maurya is partially supported by UGC Doctoral Fellowship, India and the work of Koushik Saha is partially supported by MATRICS grant of SERB, Department of Science and Technology, Government of India.}
\begin{document}

\begin{abstract}
In this article, we study the fluctuation of linear eigenvalue statistics of symmetric circulant matrices $(SC_n)$ with independent entries which satisfy some moment conditions.
We show that $\frac{1}{\sqrt{n}} \Tr \phi(SC_n)$ obey the central limit theorem (CLT) type result, where 
$\phi$ is a nice test function.

\end{abstract}

\maketitle

\noindent{\bf Keywords :} Symmetric circulant matrix, linear statistics of eigenvalues, weak convergence, central limit theorem, Trace formula, Wick's formula.
\section{ introduction and main results}
Let $A_n$ be an $n\times n$ matrix with real or complex entries. The linear statistics of
eigenvalues $\lambda_1,\lambda_2,\ldots, \lambda_n$ of $A_n$ is a
function of the form
\begin{equation} \label{eqn:1}
\frac{1}{n}\sum_{k=1}^{n}f(\lambda_k)
\end{equation}
where $f$ is some  fixed function. The function $f$ is known as the test function. One of the interesting 
object  to study in random matrix theory  is the fluctuation of linear
statistics of eigenvalues of random matrices.  The study of fluctuation of linear statistics of eigenvalues was  initiated by Arharov \cite{arharov} in 1971 for sample covariance matrices. In 1975 Girko \cite{girko} studied the central limit theorem (CLT) of the traces of the Wigner and sample covariance matrices using martingale techniques. In
1982, Jonsson \cite{jonsson} proved the 
CLT of linear eigenvalue statistics for Wishart matrices using method of moments.  After that the fluctuations of
eigenvalues for various  random matrices have been extensively studied by various people.  
For new results on fluctuations  of linear eigenvalue statistics of  Wigner and sample covariance matrices,  see  \cite{johansson1998}, \cite{soshnikov1998tracecentral}, \cite{bai2004clt}, \cite{lytova2009central}, \cite{shcherbina2011central}.  For   band and sparse  random matrices, see  \cite{anderson2006clt},  \cite{jana2014}, \cite{li2013central},  \cite{shcherbina2015} and for Toeplitz and band Toeplitz matrices, see  \cite{chatterjee2009fluctuations} and \cite{liu2012}.

In a recent article \cite{adhikari_saha2017},  the CLT for linear eigenvalue statistics has been established in total variation norm for   circulant, symmetric circulant and reverse circulant matrices   with Gaussian entries. In a subsequent article \cite{adhikari_saha2018}, the authors extended their results for independent entries which are smooth functions of Gaussian variables.  Here we consider the fluctuation problem for  symmetric circulant matrices with general entries which are independent and  satisfy some moment condition.
 

A sequence is said to be an {\it input sequence} if the matrices are constructed from the given sequence. We consider the input sequence of the form $\{x_i: i\geq 0\}$
and the symmetric circulant matrix is defined as 
$$
SC_n=\left(\begin{array}{cccccc}
x_0 & x_1 & x_2 & \cdots & x_{2} & x_1 \\
x_1 & x_0 & x_1 & \cdots & x_{3} & x_{2}\\
x_{2} & x_1 & x_0 & \cdots & x_{4} & x_{3}\\
\vdots & \vdots & {\vdots} & \ddots & {\vdots} & \vdots \\
x_1 & x_2 & x_3 & \cdots & x_1 & x_0
\end{array}\right).
$$
For $j=1,2,\ldots, n-1$, its $(j+1)$-th row is obtained by giving its $j$-th row a right circular shift by one positions and the (i,\;j)-th element of the matrix is $x_{\frac{n}{2}-|\frac{n}{2}-|i-j||}$. Also note that the symmetric circulant matrix is a Toeplitz matrix with the restriction that $x_{n-j}=x_j$.

Now we consider linear eigenvalue statistics as defined in \eqref{eqn:1} for $SC_n$ with test function $f(x)=x^{p}$, $p\geq 2$. Therefore
$$ \sum_{k=1}^{n}f(\lambda_k)= \sum_{k=1}^{n}(\lambda_k)^{p}= \Tr(SC_n)^{p},$$
where $\lambda_1,\lambda_2,\ldots,\lambda_n$ are the eigenvalues of $SC_n$. We scale and centre $\Tr(SC_n)^{p}$ to study its fluctuation, and define 
\begin{equation}\label{eqn:SCw_p}
w_p := \frac{1}{\sqrt{n}} \bigl\{ \Tr(SC_n)^{p} - \E[\Tr(SC_n)^{p}]\bigr\}. 
\end{equation}
For a given real polynomial  $$Q(x)=\sum_{k=1}^da_kx^{k}$$ 
with degree $d$ where $d\geq 2$, we define 
\begin{equation}\label{eqn:RCw_Q}
w_Q := \frac{1}{\sqrt{n}} \bigl\{ \Tr(Q(SC_n)) - \E[\Tr(Q(SC_n))]\bigr\}. 
\end{equation}
 
Note that $w_Q$ and $w_p$ depends on $n$. But we suppress $n$  to keep the notation simple. In our first result, we calculate the covariance between $w_p$ and $w_q$ as $n \tends \infty$. 
\begin{theorem}\label{thm:symcircovar}
 Suppose  $SC_n$ is the symmetric circulant matrix with independent input sequence $\{\frac{X_i}{\sqrt n}\}_{i\geq 0}$ such that
 \begin{equation}\label{eqn:condition}
\E(X_i)=0, \E(X_i^2)=1,  \E(X_i^4)=\E(X_1^4)\  \mbox{and}\  \sup_{i\geq 1}\E(|X_i|^k)=\alpha_k<\infty \ \mbox{for}\ k\geq 3.
\end{equation}  
 Then for $p,q \geq2$,
 \begin{align} \label{eqn:sigma_p,q}
	\sigma_{p,q}&:=\lim_{n\to\infty}  \Cov\big(w_p,w_q\big) \nonumber \\ 
	&=  \left\{\begin{array}{ll} 	 
		 	\displaystyle\frac{a_1}{2^{\frac{p+q-4}{2}}  } (\E X^4_1- 1) + \sum_{r=2}^{ \min\{ \frac{p}{2},\frac{q}{2} \} } \frac{a_r}{2^{\frac{p+q-4r}{2}}  } \sum_{s=0}^{2r}\binom{2r}{s}^2 s!(2r-s)! \ h_{2r}(s) & \text{if}\ p,  q \mbox{ both are even,}\\\\
		 	\displaystyle\sum_{r=0}^{ \min\{ \frac{p-1}{2},\frac{q-1}{2} \} }  \frac{b_r}{2^{\frac{p+q-4r-2}{2}}  } \sum_{s=0}^{2r+1}\binom{2r+1}{s}^2 s!(2r+1-s)! \ h_{2r+1}(s) \\		 	
		\displaystyle+ pq \binom{p-1}{(p-1)/2}\binom{q-1}{(q-1)/2} \l((p-1)/2\r)! \l((q-1)/2\r)!\frac{1}{2^{(\frac{p+q}{2}-1)}} 	 & \text{if}\ p,  q \mbox{ both are odd,}\\\\
			0 & \text{otherwise}, 	 	 
		 	  \end{array}\right.		
\end{align}
where $a_r$ and $b_r$ are appropriate constants, will be given in proof, and $h_d(s)$ is given as
$$h_d(s)=
 \frac{1}{(d-1)!}\sum_{i=-\lceil\frac{d-s}{2}\rceil}^{\lfloor\frac{s}{2}\rfloor}\sum_{j=0}^{2i+d-s}(-1)^q\binom{d}{j}\l(\frac{2i+d-s-j}{2}\r)^{d-1}.$$
 \end{theorem}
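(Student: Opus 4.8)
The plan is to build everything on the trace formula together with the explicit link structure of the symmetric circulant. Writing $a_{jk} = \frac{1}{\sqrt n} X_{g(j-k)}$, where $g(t) = \frac{n}{2} - |\frac{n}{2} - (|t| \bmod n)|$ is the circular-distance index identified in the excerpt, I would expand
$$\Tr(SC_n)^p = \frac{1}{n^{p/2}} \sum_{i_1,\dots,i_p} X_{g(i_1-i_2)} X_{g(i_2-i_3)} \cdots X_{g(i_p-i_1)},$$
and then reparametrise the sum by the link variables $u_k = i_k - i_{k+1} \pmod n$, subject to the single cyclic constraint $\sum_{k=1}^{p} u_k \equiv 0 \pmod n$, with one free vertex contributing an overall factor $n$. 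The structural feature that drives the whole computation is that $g(u) = g(v)$ holds precisely when $u \equiv \pm v \pmod n$; this $\pm$ ambiguity is exactly what will generate the powers of $\tfrac12$ and the sign-pattern combinatorics appearing in the final formula.

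Next I would reduce the covariance to a double sum over a $p$-chain and a $q$-chain. Since $\Cov(w_p,w_q) = n^{-1-(p+q)/2} \sum \Cov\bigl(\prod_k X_{g(u_k)}, \prod_l X_{g(v_l)}\bigr)$ and the $X_i$ are independent, a product-covariance vanishes unless the two index multisets overlap, so only configurations in which links of the $p$-chain share an index with links of the $q$-chain contribute. I would then organise the terms by their matching type and run a power-counting argument: each distinct index value must be used an even number of times for the term to survive centring, and among the even matchings only the pair-type ones (plus the single shared value used twice in each chain) carry the maximal number of free coordinates, so all remaining configurations are of strictly lower order and may be discarded after the $n^{-1-(p+q)/2}$ normalisation.

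For each admissible matching I would carry out the exact count. Fixing which links pair across the two chains, every matched equality $g(u) = g(v)$ splits into the two cases $u \equiv v$ and $u \equiv -v$; choosing $s$ of the $2r$ (resp. $2r+1$) relevant links to be of one sign type produces the factors $\binom{2r}{s}^2 s!(2r-s)!$ (resp. the odd analogue) together with the $\tfrac{1}{2^{\cdots}}$ normalisation. The surviving linear constraint $\sum \epsilon_k u_k \equiv 0 \pmod n$, with signs $\epsilon_k \in \{\pm1\}$ dictated by the sign pattern, becomes in the scaling limit the density-at-zero of a signed sum of uniform variables; evaluating this slice of the cube by inclusion--exclusion yields precisely the alternating binomial sum defining $h_d(s)$. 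The configuration in which a single index is shared with multiplicity two in each chain contributes $\E[X_1^2 X_1^2] - \E[X_1^2]\E[X_1^2] = \E X_1^4 - 1$, accounting for the fourth-moment term, while the constants $a_r, b_r$ are read off by collecting the multinomial coefficients from expanding the traces and from the number of ways to select the cross-chain links.

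Finally, the three cases are separated by a parity argument: the cyclic constraints together with the sign structure impose a parity condition linking $p$ and $q$, and when one is even and the other odd no admissible sign pattern can meet the matching and the constraint simultaneously at leading order, forcing $\sigma_{p,q}=0$; the even--even and odd--odd cases give the two displayed sums, with the extra additive term in the odd--odd case arising from a distinguished coupling configuration that carries no sign ambiguity. The hard part will be the third stage, namely the simultaneous bookkeeping of the sign patterns and the derivation of the closed form $h_d(s)$: ensuring that the cross-chain pairings are enumerated without over- or under-counting, and that the continuum limit of the lattice-point constraint is justified uniformly in the sign pattern, is where the real work lies.
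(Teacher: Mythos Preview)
Your approach is correct and ultimately lands on the same combinatorics as the paper, but the entry point is different. The paper does not expand $\Tr(SC_n)^p$ via matrix entries and link variables; instead it uses the explicit eigenvalues of a symmetric circulant, $\lambda_\ell = X_0 + 2\sum_{j=1}^{n/2} X_j\cos(\omega_\ell j)$, and sums $\lambda_\ell^p$ directly. After expanding the cosines into exponentials and using $\sum_\ell e^{i\omega_\ell m}=0$ for $m\not\equiv 0$, this produces \emph{immediately} the sums over the sets $A_k=\{(j_1,\dots,j_k):\sum\epsilon_i j_i\equiv 0\pmod n\}$, with the diagonal variable $X_0$ already split off as a separate binomial factor. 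Your link-variable reparametrisation together with the observation $g(u)=g(v)\Leftrightarrow u\equiv\pm v$ reproduces exactly these $A_k$ sets, so from that point on the two arguments are the same: the same power counting, the same ``opposite-sign pair matching'' for the dominant configurations, the same factors $\binom{2r}{s}^2 s!(2r-s)!$, and the same inclusion--exclusion for $h_d(s)$ (which the paper simply imports as a cited result rather than re-deriving). What the spectral route buys is that the role of $X_0$ is transparent from the start; in particular, the extra additive term in the odd--odd case that you describe only as ``a distinguished coupling configuration that carries no sign ambiguity'' is, concretely, the contribution from exactly one $X_0$ in each of the two traces (your $u_k=0$ links), and you should make this explicit. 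The paper also treats $n$ odd and $n$ even separately (the even case has a second boundary index $X_{n/2}$ and an auxiliary set $\tilde A_k$), a technicality your sketch does not mention but which your framework would handle the same way.
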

If $p=q$ then we denote $\sigma_{p,q}$ by $\sigma^{2}_{p}$.
In our second result, we see the fluctuation of linear  eigenvalue statistics of symmetric circulant matrices with polynomial test functions.
\begin{theorem}\label{thm:symcirpoly} 
Suppose  input entry of $SC_n$ is independent sequence $\{\frac{X_i}{\sqrt n}\}_{i\geq 1}$ which satisfy \ref{eqn:condition}.
Then, as $n\to \infty$,
\begin{align*}
w_Q \stackrel{d}{\longrightarrow} N(0,\sigma_{Q}^2).
\end{align*}
In particular, for $Q(x)= x^{p}$
\begin{align*}
 w_p \stackrel{d}{\longrightarrow} N(0,\sigma_{p}^2),
\end{align*}
where 
$$\sigma_{Q}^2= \sum_{\ell=1}^d \sum_{k=1}^d a_{\ell} a_k \sigma_{\ell,k}, \ \  \sigma^2_{p}= \sigma_{p,p}$$
and  $\sigma_{p,q}$ is as given in (\ref{eqn:sigma_p,q}). 
\end{theorem}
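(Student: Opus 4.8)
The plan is to establish the central limit theorem by the \emph{method of moments}, taking the covariance computation of Theorem~\ref{thm:symcircovar} as the second-order input and using the Cram\'er--Wold device to handle the general polynomial $Q$. Writing $Q(x)=\sum_{k=1}^d a_k x^k$ gives $w_Q=\sum_{k=1}^d a_k w_k$; the degree-one summand is, under the $i\ge 1$ normalization of the hypothesis, an essentially degenerate contribution that may be set aside, so it suffices to prove that the vector $(w_2,\dots,w_d)$ converges in distribution to a centered Gaussian vector $(\mathcal N_2,\dots,\mathcal N_d)$ with $\Cov(\mathcal N_p,\mathcal N_q)=\sigma_{p,q}$ as in \eqref{eqn:sigma_p,q}. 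Granting this, Cram\'er--Wold yields $w_Q=\sum_k a_k w_k\stackrel{d}{\longrightarrow}N(0,\sigma_Q^2)$ with $\sigma_Q^2=\sum_{\ell,k}a_\ell a_k\sigma_{\ell,k}$ for every choice of the coefficients, and the case $Q(x)=x^p$ gives $w_p\stackrel{d}{\longrightarrow}N(0,\sigma_p^2)$.

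The computational engine is the trace formula for symmetric circulant matrices. Expressing the eigenvalues of $SC_n$ through the discrete Fourier transform of the reflection-symmetric input $x_l=X_l/\sqrt n$ (with $x_l=x_{n-l}$) gives
\[
\Tr (SC_n)^p=\frac{n}{n^{p/2}}\sum_{\substack{l_1,\dots,l_p\in\{0,\dots,n-1\}\\ l_1+\cdots+l_p\equiv 0 \;(\mathrm{mod}\ n)}} X_{l_1}\cdots X_{l_p},
\]
so that each $w_p$ is an explicit centered polynomial in the independent variables $\{X_i\}$. First I would substitute this into the joint moments $\E\bigl[\prod_{i=1}^m w_{p_i}\bigr]$, expanding them into $m$ index blocks, each carrying its own modular constraint, and then reorganize the resulting sum according to how the $X$-variables among the blocks coincide, keeping careful track of the reflection identification $X_l=X_{n-l}$ forced by the symmetric structure.

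The heart of the proof is the power-counting that isolates the Gaussian (Wick) contribution. The normalization $n^{(1-p_i)/2}$ attached to each block is calibrated so that a configuration in which the $m$ blocks split into disjoint \emph{pairs}, each pair sharing its indices so that the two modular constraints collapse into one, is exactly $O(1)$; such a pair of blocks then contributes the limiting factor $\sigma_{p_i,p_j}$ of Theorem~\ref{thm:symcircovar} (which already vanishes when $p_i,p_j$ have opposite parity). The \textbf{main obstacle} is to prove that every other configuration is $o(1)$: I must show that linking three or more blocks, or leaving a block unpaired, forces strictly fewer free summation indices than the normalization can absorb. This is the delicate step, since the modular constraint and the reflection symmetry $l\leftrightarrow n-l$ interact, and the sub-leading ``diagonal'' terms, where several indices within a block coincide, must be bounded uniformly; here the moment hypotheses $\sup_i\E|X_i|^k=\alpha_k<\infty$ supply the needed domination, while $\E X_i^4=\E X_1^4$ stabilizes the genuine fourth-order contributions that survive in the even--even case of \eqref{eqn:sigma_p,q}.

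Granting the power-counting, the joint moments converge to the Wick sums $\sum_{\text{pairings}}\prod\sigma_{p_i,p_j}$ for even total degree and to $0$ for odd total degree, which are precisely the mixed moments of the Gaussian vector $(\mathcal N_2,\dots,\mathcal N_d)$. Since the Gaussian law is determined by its moments (Carleman's condition), convergence of all moments upgrades to convergence in distribution; the Cram\'er--Wold reduction of the first paragraph then delivers $w_Q\stackrel{d}{\longrightarrow}N(0,\sigma_Q^2)$, completing the proof.
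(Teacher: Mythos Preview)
Your proposal is correct and follows essentially the same route as the paper: method of moments, the trace formula for $SC_n$, a power-counting/cluster argument showing that only pair-matched blocks survive (with Theorem~\ref{thm:symcircovar} supplying each pair's limit), and then Wick's formula to identify the Gaussian limit. The paper packages your ``main obstacle'' as explicit cluster lemmas (Lemmas~\ref{lem:cluster}--\ref{lem:cluster,decompose}) and treats the $n$ odd/even cases separately via two trace formulae, but the underlying mechanism is the one you describe.
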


	
\begin{remark}
In the above theorems we have considered the fluctuation of $w_p$ for  $p\geq2$. For $p=0$, 
\begin{align*}
w_0 = \frac{1}{\sqrt{n}} \bigl\{ \Tr(I) - \E[\Tr(I)]\bigr\} = \frac{1}{\sqrt{n}}[n-n] = 0
\end{align*}
and hence it has no fluctuation.
For $p=1$,
\begin{align*}
w_1 = \frac{1}{\sqrt{n}} \bigl\{ \Tr(SC_n) - \E[\Tr(SC_n)]\bigr\} = \frac{1}{\sqrt{n}} \big[n \frac{X_0}{\sqrt n} - \E(n\frac{X_0}{\sqrt n})\big] = X_0,
\end{align*}
as $\E(X_0)=0$. So $w_1$ is distributed as $X_0$ and its distribution does not depend on $n$. So we ignore these two cases, for $p=0$ and $p=1$.  
\end{remark}	
	

In Section \ref{sec:cov} we prove Theorem \ref{thm:symcircovar}. We derived trace formula and state some results which will be used to prove Theorem \ref{thm:symcircovar}. In Section \ref{sec:poly} we prove Theorem \ref{thm:symcirpoly}. We use  method of moments and  Wick's formula to prove Theorem  \ref{thm:symcirpoly}.

\section{Proof of Theorem \ref{thm:symcircovar}}\label{sec:cov}

%
We first define some notation which will be used   in the proof of Theorem \ref{thm:symcirpoly}.
\begin{align} \label{def:A_p_SC}
A_{p}&=\{(j_1,\ldots,j_{p})\suchthat \sum_{i=1}^{p}\epsilon_i j_i=0\; \mbox{(mod n)}, \epsilon_i\in\{+1,-1\}, 1\le j_1,\ldots,j_{p}\le \frac{n}{2}\}, \\
\tilde{A}_{k}&=\{(j_1,\ldots,j_{k})\suchthat \sum_{i=1}^{k}\epsilon_i j_i=0\; \mbox{(mod } \frac{n}{2}) \mbox{ and } \sum_{i=1}^{k}\epsilon_i j_i \neq 0\; \mbox{(mod }n),  \epsilon_i\in\{+1,-1\}, 1\le j_1,\ldots,j_{k}\le \frac{n}{2}\}, \nonumber
\\A_p^{(k)}&=\{(j_1,\ldots,j_p)\in A_p\suchthat j_1+\cdots+j_k - j_{k+1}-\cdots -j_p=0 \;\mbox{ (mod $n$)}\}, \nonumber
\\A_p'^{(k)}&=\{(j_1,\ldots,j_p)\in A_p\suchthat j_1+\cdots+j_k - j_{k+1}-\cdots -j_p=0 \;\mbox{ (mod $n$)}, j_1\neq \cdots \neq j_k \}, \nonumber
\\A_{p,s}^{(k)}&=\{(j_1,\ldots,j_p)\in A_p\suchthat j_1+\cdots+j_k - j_{k+1}-\cdots -j_p=sn\}. \nonumber
\end{align}
In  set $A_p$ and $\tilde{A}_p$, we collect $(j_1,\ldots,j_{p})$ according to their multiplicity. 

      Now we derive a convenient formula of trace for symmetric circulant matrices. First suppose $n$ is  odd positive integers. We write $ n/2$ instead of $\lfloor n/2\rfloor $, as asymptotic is same as $n\to\infty$. Then
\begin{align*}
\Tr(SC_n^p)&=\sum_{\ell=0}^{n-1}\lambda_\ell^p
=\sum_{\ell=0}^{n-1}\left(X_0+2\sum_{j=1}^{n/2}X_j\cos(\omega_\ell j)\right)^p
\\&=\sum_{k=0}^{p}\binom{p}{k}X_0^{p-k}\sum_{\ell=0}^{n-1} \left(\sum_{j=1}^{n/2}X_j(e^{i\omega_\ell j}+e^{-i\omega_\ell j})\right)^{k},
\end{align*}
where $\omega_\ell=\frac{2\pi \ell}{n}$. Since  $\sum_{\ell=0}^{n-1}e^{i\omega_\ell j}=0$ for $j\in \mathbb Z\backslash \{0\}$, we have 
\begin{align}\label{trace formula_SC_odd}
\Tr(SC_n^p)=n\sum_{k=0}^{p}\binom{p}{\ell}X_0^{p-k}\sum_{A_{k}} X_{j_1}X_{j_2}\ldots X_{j_{k}},
\end{align}
where  $A_{k}$ for $k=1,\ldots,p$ is given by
\begin{align*}
A_{k}:=\l\{(j_1,\ldots,j_{k})\suchthat \sum_{i=1}^{k}\epsilon_i j_i=0\; \mbox{(mod n)}, \epsilon_i\in\{+1,-1\}, 1\le j_1,\ldots,j_{k}\le \frac{n}{2}\r\}
\end{align*} 
and $A_0$ is an empty set with the understanding that the contribution from the sum corresponding to $A_0$ is 1. Note, in $A_{k}$, $(j_1,\ldots,j_{k})$ are collected according to their multiplicity. 

  Now suppose $n$ is even positive integers. We write $ n/2$ instead of $ n/2-1$, as asymptotic is same as $n\to\infty$. Then
  \begin{align*}
\Tr(SC_n^p)=\sum_{\ell=0}^{n-1}\lambda_\ell^p
&=\sum_{\ell=0}^{n-1} \Big\{X_0 + (-1)^\ell X_{\frac{n}{2}}+2\sum_{j=1}^{n/2}X_j\cos(\omega_\ell j) \Big\}^p\\
&=\sum_{\ell=0}^{n-1} \sum_{k=0}^{p}\binom{p}{k} {(X_0 + (-1)^\ell X_{\frac{n}{2}})}^{p-k} \Big\{ \sum_{j=1}^{n/2}X_j(e^{i\omega_\ell j}+e^{-i\omega_\ell j}) \Big\}^{k} \\
&=\sum_{k=0}^{p}\binom{p}{k} \Big\{ {(X_0 +  X_{\frac{n}{2}})}^{p-k} \sum_{\ell=0, even}^{n-1} \Big[ \sum_{j=1}^{n/2}X_j (e^{i\omega_\ell j}+e^{-i\omega_\ell j}) \Big] ^{k} \\
& + {(X_0 -  X_{\frac{n}{2}} )}^{p-k} \sum_{\ell=0, odd}^{n-1}  
\Big[ \sum_{j=1}^{n/2}X_j (e^{i\omega_\ell j}+e^{-i\omega_\ell j}) \Big] ^{k} \Big\},
\end{align*}
where $\omega_\ell=\frac{2\pi \ell}{n}$. Since we know
 \begin{align*} 
	\sum_{\ell=0, even}^{n-1}   e^{i\omega_\ell j}
&=  \left\{\begin{array}{ccc} 	 
		 \frac{n}{2} & \text{if}& j =0\; \mbox{(mod } \frac{n}{2}) \\\\
		 	0 & \text{if}&  j \neq0\; \mbox{(mod } \frac{n}{2}),	 
		 	  \end{array}\right.	 	  
\end{align*}
and 
\begin{align*} 
	\sum_{\ell=0, odd}^{n-1}   e^{i\omega_\ell j}
&=  \left\{\begin{array}{ccc} 	 
		 \frac{n}{2} & \text{if}& j =0\; \mbox{(mod } \frac{n}{2}) \mbox{ and }   j = 0\; \mbox{(mod n)} \\\\
		 -\frac{n}{2} & \text{if}& j =0\; \mbox{(mod } \frac{n}{2}) \mbox{ and }   j \neq0\; \mbox{(mod n)} \\\\
		 	0 & \text{if}&  j \neq0\; \mbox{(mod } \frac{n}{2}) ,	 
		 	  \end{array}\right.	 	  
\end{align*} 
Therefore from the above last two observations, $\Tr(SC_n^p)$ will be
\begin{align} \label{trace formula_SC_even}
 \Tr(SC_n^p) &=\frac{n}{2} \sum_{k=0}^{p}\binom{p}{k} \Big[ \Big\{ {(X_0 +  X_{\frac{n}{2}})}^{p-k} + {(X_0 -  X_{\frac{n}{2}})}^{p-k} \Big\} \sum_{A_k} X_{j_1}X_{j_2}\ldots X_{j_{k}} \nonumber\\
& \qquad + \Big\{ {(X_0 +  X_{\frac{n}{2}})}^{p-k} - {(X_0 -  X_{\frac{n}{2}})}^{p-k} \Big\} \sum_{ \tilde{A}_k} X_{j_1}X_{j_2}\ldots X_{j_{k}} \Big] \nonumber \\
& = \frac{n}{2} \sum_{k=0}^{p}\binom{p}{k} \Big[ Y_k  \sum_{A_k} X_{J_{k}} + \tilde{Y}_k \sum_{ \tilde{A}_k}  X_{J_{k}} \Big] , \mbox{ say}
\end{align}
where for each  $k=0, 1, 2,\ldots, p$, $A_{k}$ is same as $A_{k}$ of $n$ odd case 
and $\tilde{A}_{k}$ for $k=1,\ldots,p$ is given by
\begin{align*}
\tilde{A}_{k}:=\l\{(j_1,\ldots,j_{k})\suchthat \sum_{i=1}^{k}\epsilon_i j_i=0\; \mbox{(mod } \frac{n}{2}) \mbox{ and } \sum_{i=1}^{k}\epsilon_i j_i \neq 0\; \mbox{(mod }n),  \epsilon_i\in\{+1,-1\}, 1\le j_1,\ldots,j_{k}\le \frac{n}{2}\r\}.
\end{align*} 
Here note that $\tilde{A}_0$ is an empty set with the understanding that the contribution from the sum corresponding to $\tilde{A}_0$ is $1$ and in $\tilde{A}_k$, $(j_1,\ldots,j_{k})$ are collected according to their multiplicity. Also 
\begin{align} \label{eqn:Y_k}
Y_k & = {(X_0 +  X_{\frac{n}{2}})}^{p-k} + {(X_0 -  X_{\frac{n}{2}})}^{p-k}, \tilde{Y}_k = {(X_0 +  X_{\frac{n}{2}})}^{p-k} - {(X_0 -  X_{\frac{n}{2}})}^{p-k} \\
X_{J_{k}} & = X_{j_1}X_{j_2}\ldots X_{j_{k}}. \nonumber 
\end{align}
From the definition of $A_k$ and $\tilde{A}_k$, observe that $|A_k|= O(n^{k-1})$, because the entries of $A_k$ has one constraint, whereas $|\tilde{A}_k|= O(n^{k-2})$, because the entries of $\tilde{A}_k$ has two constraints. Therefore
\begin{equation} \label{eqn:A,tildeA}
|\tilde{A}_k| < |A_k|.
\end{equation}

The following result will be used in the proof of  Theorem \ref{thm:symcircovar}. 

\begin{result} \label{result:def_h}
	Suppose $|A_{p,s}|$ denotes   the cardinality of $A_{p,s}$. Then 
	$$
	h_p(k):=
\lim_{n\to \infty}\frac{|A_p^{(k)}|}{n^{p-1}}= \frac{1}{(p-1)!}\sum_{s=-\lceil\frac{p-k}{2}\rceil}^{\lfloor\frac{k}{2}\rfloor}\sum_{q=0}^{2s+p-k}(-1)^q\binom{p}{q}\l(\frac{2s+p-k-q}{2}\r)^{p-1}, 
$$
where $\lceil x\rceil$ denotes the smallest integer not less than $x$.

\end{result}
For the proof of Result \ref{result:def_h}, we refer to \cite[Lemma 14]{adhikari_saha2017}. 
Now for a given vector $(j_1, j_2, \ldots, j_p)$, we define a term called {\it opposite sign pair matched} elements of the vector. 
\begin{definition}\label{def:odd-even}
	Suppose $(j_1, j_2, \ldots, j_p) \in A_p$.
	 We say $j_k, j_\ell$ is {\it opposite sign pair matched}, if $\epsilon_k$ and $\epsilon_\ell$ corresponding to $j_k$ and $j_\ell$, respectively, are of opposite sign and $j_k= j_\ell$, where $\epsilon_k$ and $\epsilon_\ell$ are corresponds to  (\ref{def:A_p_SC}).
	 For example; In $(2,3,5,2)$, entry $2$ is {\it opposite sign pair matched}, if $\epsilon_1=1$ and $\epsilon_4=-1$ or $\epsilon_1=-1$ and $\epsilon_4=1$ whereas if $\epsilon_1$ = $\epsilon_4= 1$ or $\epsilon_1$ = $\epsilon_4= -1$, then $2$ is not  {\it opposite sign pair matched}. Similarly, we can also define {\it opposite sign pair matched} elements of $\tilde{A}_p$. We shall call, vector $(j_1, j_2, \ldots, j_p)$ is {\it opposite sign pair matched}, if all the entries of $(j_1, j_2, \ldots, j_p)$ are {\it opposite sign pair matched}.
\end{definition}
 Observe that, if $(j_1, j_2, \ldots, j_p)\in A_{p}$, that is, $\sum_{i=1}^{p}\epsilon_i j_i=0 \mbox{ (mod $n$) }$ and each entry of $\{j_1, j_2, \ldots, j_p\}$ has multiplicity greater than or equal to two. Then the maximum number of free variable in $(j_1, j_2, \ldots, j_p)$ will be $\frac{p}{2}$ only when $p$ is even and $(j_1, j_2, \ldots, j_p)$ is {\it opposite sign pair matched}. We shall use this observation in proof of Theorem \ref{thm:symcircovar}, for maximum contribution.

Now assuming the above Result, we proceed to prove Theorem \ref{thm:symcircovar}. 
We shall use trace formula of $SC_n$ to prove \ref{thm:symcircovar}. Since for odd and even value of $n$, we have different trace formula, therefore we shall prove \ref{thm:symcircovar} in two steps. In Step 1, we calculate limit of $\Cov\big(w_p,w_q\big)$ as $n\to\infty$ with odd $n$ and in Step 2, we calculate limit of $\Cov\big(w_p,w_q\big)$ as $n\to\infty$ with even $n$. We shall show that for both the cases, even and odd value of $n$,  limit of $\Cov\big(w_p,w_q\big)$ is same.
\begin{proof}[Proof of Theorem \ref{thm:symcircovar}]
	Since $\E(w_p)=\E(w_q)=0$, therefore we get
	\begin{align*}
	\Cov\big(w_p,w_q\big)&=\E[w_p w_q]
	= \frac{1}{n} \Big\{ \E[\Tr(SC_n)^{p}\Tr(SC_n)^{q} ]- \E[\Tr(SC_n)^{p}]\E[\Tr(SC_n)^{q}]   \Big\}. 
	\end{align*}
	First we suppose $\Cov\big(w_p,w_q\big)$ for odd value of $n$. \\\
	
	\noindent \textbf{Step 1.} Suppose $n$ is odd, then by the trace formula (\ref{trace formula_SC_odd}), we get
	\begin{align*}
	\E[\Tr(SC_n)^{p}]&= \E\Big[n\sum_{k=0}^{p}\binom{p}{k}X_0^{p-k}\sum_{A_{k}}\frac{X_{i_1}}{ \sqrt{n}} \cdots \frac{X_{i_{k}}}{ \sqrt{n}} \Big]
	= \frac{1}{ n^ {\frac{p}{2}-1}} \E\Big[ \sum_{k=0}^{p}\binom{p}{k}X_0^{p-k}\sum_{A_{k}} X_{i_1} \cdots X_{i_{k}}\Big].	
	\end{align*}
	Therefore
	\begin{align} \label{eqn:T_1+T_2_SC}
	\Cov\big(w_p,w_q\big)&=\E[w_p w_q] \nonumber\\
	&= \frac{1}{n^{\frac{p+q}{2}-1}} \Big[ \E\Big\{ \Big( \sum_{k=0}^{p}\binom{p}{k} X_0^{p-k}\sum_{A_{k}}  X_{i_1} \cdots X_{i_{k}} \Big) \Big( \sum_{\ell=0}^{q}\binom{q}{\ell}X_0^{q-\ell}\sum_{A_{\ell}} X_{j_1} \cdots X_{j_{\ell}} \Big)  \Big\}\nonumber \\
	 &\qquad  - \E \Big( \sum_{k=0}^{p}\binom{p}{k}X_0^{p-k}\sum_{A_{k}}  X_{i_1} \cdots X_{i_{k}} \Big) \E \Big( \sum_{\ell=0}^{q}\binom{q}{\ell}X_0^{q-\ell}\sum_{A_{\ell}} X_{j_1} \cdots X_{j_{\ell}} \Big)   \Big] \nonumber\\
	  &=\frac{1}{n^{\frac{p+q}{2}-1}} \sum_{k, \ell =0}^{p, q}\binom{p}{k} \binom{q}{\ell}  \sum_{A_{k}, A_{\ell}} \Big\{\E[ X_0^{p+q-k-\ell}] \E[X_{i_1} \cdots X_{i_{k}} X_{j_1} X_{j_2}\cdots X_{j_{\ell}} ]\\
	 &\qquad -\E[X_0^{p-k}]\E[ X_{i_1} \cdots X_{i_{k}} ] \E[X_0^{q-\ell}]\E[ X_{j_1} \cdots X_{j_{\ell}} ]   \Big\}.	 \nonumber 
	\end{align}
	Depending on the values of $k$ and $\ell$, the following two cases arise. \\\
	
	\noindent \textbf{Case I.} \textbf{Either $k=p, \ell \leq q$ or $\ell=q, k \leq p$ :}  Since in this case, we always get $\E[ X_0^{p+q-k-\ell}]= \E[X_0^{p-k}] \E[X_0^{q-\ell}]$. Therefore, if $\{i_1,i_2,\ldots,i_{k}\}\cap \{j_1,j_2,\ldots,j_{\ell}\}=\emptyset$ then from independence of $X_i$'s, we get
	$$\E[ X_0^{p+q-k-\ell}] \E[X_{i_1} \cdots X_{i_{k}} X_{j_1} X_{j_2}\cdots X_{j_{\ell}} ] -\E[X_0^{p-k}]\E[ X_{i_1} \cdots X_{i_{k}} ] \E[X_0^{q-\ell}]\E[ X_{j_1} \cdots X_{j_{\ell}} ]  =0.$$
	Hence in this case, we can get non-zero contribution from (\ref{eqn:T_1+T_2_SC}) only when there is at least one cross-matching among $\{i_1,\ldots,i_{k}\}$ and $\{j_1,\ldots,j_{\ell}\}$, i.e., $\{i_1,i_2,\ldots,i_{k}\}\cap \{j_1,j_2,\ldots,j_{\ell}\}\neq\emptyset$ for some $k =0, 1, \ldots, p$ and $\ell=0, 1, \ldots, q$. 
	So from the above observation, (\ref{eqn:T_1+T_2_SC}) can be written as
	\begin{align} \label{eqn:T_k,l_SC}
	\lim_{n\to\infty}  \Cov\big(w_p,w_q\big)  &=\lim_{n\to\infty}  \frac{1}{n^{\frac{p+q}{2}-1}} \sum_{k, \ell =0}^{p, q}\binom{p}{k} \binom{q}{\ell}  \sum_{m=1}^{ \min \{k,\ell\} } \sum_{I_m} \Big\{\E[ X_0^{p+q-k-\ell}] \E[X_{i_1} \cdots X_{i_{k}} X_{j_1} \cdots X_{j_{\ell}} ]\nonumber \\
	 &\qquad -\E[X_0^{p-k}]\E[ X_{i_1} \cdots X_{i_{k}} ] \E[X_0^{q-\ell}]\E[ X_{j_1} \cdots X_{j_{\ell}} ]        \Big\}	 \nonumber \\	
	& =\lim_{n\to\infty}  \frac{1}{n^{\frac{p+q}{2}-1}} \sum_{k, \ell =0}^{p, q}\binom{p}{k} \binom{q}{\ell} \sum_{m=1}^{ \min \{k,\ell\} } T^m_{k,\ell}, \mbox{ say},
\end{align}		 
	where for each $m=1,2, \ldots, \min\{p,q\}$, $I_m$ is defined as
	\begin{equation}\label{def:I_k}
	I_{m}:=\{((i_1,\ldots,i_{k}),(j_1,\ldots,j_{\ell}))\in A_{k}\times A_{\ell}\suchthat |\{i_1,\ldots,i_{k}\}\cap \{j_1,\ldots,j_{\ell}\}|=m\}.
	\end{equation}
	Now we calculate the contribution due to the typical term $T^m_{k,\ell}$ of (\ref{eqn:T_k,l_SC}) for some fixed value of $k= 1, 2, \ldots, p$, $\ell= 1, 2, \ldots, q$ and $m =1,2, \ldots, \min\{k,\ell\}$.  Since from (\ref{eqn:condition}), we have 
	\begin{equation*}
	\E[X_i]= 0, \ \E(X^2_i)=1 \mbox{ and } \sup_{i \geq 1}\E(|X_i|^{k})= \alpha_k < \infty \mbox{ for } k \geq 3.
	\end{equation*}  
	Therefore there exist $\gamma >0$, which depends only on $k$ and $\ell$, such that 
	\begin{align} \label{eqn:gamma_1}
	|T^m_{k,\ell}| & = \sum_{I_m} |\E[ X_0^{p+q-k-\ell} X_{i_1} \cdots X_{i_{k}} X_{j_1} \cdots X_{j_{\ell}} ] -\E[X_0^{p-k} X_{i_1} \cdots X_{i_{k}} ] \E[X_0^{q-\ell}  X_{j_1} \cdots X_{j_{\ell}} ] | \nonumber \\   
	& \leq \gamma |B_{k,\ell}|,
	\end{align}
	where  $B_{k,\ell} \subseteq A_k \times A_\ell$ with conditions that  $\{ i_1, i_2,\ldots,i_k \} \cap \{ j_1, j_2,\ldots,j_\ell \} \neq \emptyset$ and each element of set $\{ i_1, i_2,\ldots,i_{k}\} \cup \{ j_1, j_2, \ldots, j_\ell\} $ has multiplicity greater than or equal to two. So, to solve (\ref{eqn:gamma_1}), it is enough to calculate the cardinality of $B_{k,\ell}$. Suppose $((i_1, i_2,\ldots,i_{k}), (j_1,j_2,\ldots,j_{\ell}))\in B_{k,\ell}$ with $|\{i_1,\ldots,i_{k}\}\cap \{j_1,\ldots,j_{\ell}\}|=m$, for some $m=1,2,\ldots, \min\{k,\ell\}$,  
	 where $|\{\cdot\}|$ denotes cardinality of the set  $\{\cdot\}$. Therefore typical element of $B_{k,\ell}$ will look like $$((d_1, d_2,\ldots, d_m, i_{m+1}, \ldots, i_{k}), (d_1,d_2,\ldots, d_m, j_{m+1}, \ldots, j_{\ell})).$$
	Observe that, we shall get maximum number of free entries in $B_{k,\ell}$, if following conditions hold
	
	\begin{enumerate}
		\item [(i)] each elements of $ \{d_1, d_2, \ldots, d_{m}\}$ are distinct,
		\item [(ii)] if $k-m$ is even. Then 
		$(i_{m+1}, \ldots, i_{k})$ is {\it opposite sign pair matched}
with $ \{d_1, d_2, \ldots, d_{m}\} \cap \{i_{m+1},\ldots,i_{k}\}=\emptyset$. Similar condition also hold when $\ell-m$ is even,
		\item [(iii)] if $k-m$ is odd. Then
$\{i_{m+1},\ldots,i_{k}\} \setminus \{i^*\}$ is {\it opposite sign pair matched} and
 $ \{d_1, d_2, \ldots, d_{m-2}\} \cap \{i_{m+1},\ldots,i_{k}\} \setminus \{i^*\}=\emptyset$, where $i^{*}$ is {\it opposite sign pair matched} with $d_s$ for some $s=1,2, \ldots, m$.
 Similar condition also hold when $\ell-m$ is odd.
	 \end{enumerate}
Under the above assumption,  the cardinality of $B_{k,\ell}$ will be
	
	 \begin{align}\label{card_B_k_l}
	 |B_{k,\ell}|  & = \left\{\begin{array}{ll} 	 
		 	 O(n^{m-1+\frac{k-m}{2} +\frac{\ell -m}{2}}) & \text{if}\ (k-m) \mbox{ and } (\ell-m) \mbox{ both are even,}\\\\
		 	  O(n^{m-3+\frac{k-m+1}{2} +\frac{\ell -m+1}{2}}) & \text{if}\ (k-m) \mbox{ and } (\ell-m) \mbox{ both are odd,}\\\\
			O(n^{m-2+\frac{k-m+\ell -m+1}{2}})& \text{otherwise}, 	 	 
		 	  \end{array}\right.	 \nonumber \\
	 & = \left\{\begin{array}{ll} 	 
		 	 O(n^{\frac{k+\ell}{2}-1}) & \text{if}\ (k-m) \mbox{ and } (\ell-m) \mbox{ both are even,}\\\\
			o(n^{\frac{k+\ell}{2}-1}) & \text{otherwise}. 	 	 
		 	  \end{array}\right.		 
	 \end{align}
Now from (\ref{eqn:gamma_1}) and (\ref{card_B_k_l}), we get
	\begin{align} \label{card_T_k,l}
	|T^m_{k,\ell}|  & = \left\{\begin{array}{ll} 	 
		 	 O(n^{\frac{k+\ell}{2}-1}) & \text{if}\ k, \ell \mbox{ and } m \mbox{ all are even or }  k, \ell \mbox{ and } m \mbox{ all are odd},\\\\
			o(n^{\frac{k+\ell}{2}-1}) & \text{otherwise}. 	 	 
		 	  \end{array}\right.	
	\end{align}
	On using (\ref{eqn:T_k,l_SC}) and (\ref{card_T_k,l}), we get that $T^m_{k,\ell}$ has non-zero contribution in (\ref{eqn:T_k,l_SC}) only when $k=p$ and $\ell=q$. In fact $T^m_{k,\ell}$ has non-zero contribution only when either $p,q, m$ all are even or $p,q, m$ all are odd. So, if we use (\ref{card_T_k,l}) in (\ref{eqn:T_k,l_SC}), we get 
	\begin{align} \label{eqn:lim_cov_SC}
	&\lim_{n\to\infty}  \Cov\big(w_p,w_q\big) \nonumber \\
	&= \displaystyle\lim_{n\to\infty} \frac{1}{n^{\frac{p+q}{2}-1}} \sum_{m=1}^{ \min \{p,q\} } \sum_{I_m} \Big\{\E[X_{i_1} \cdots X_{i_{p}} X_{j_1} \cdots X_{j_{q}} ] -\E[ X_{i_1} \cdots X_{i_{p}} ] \E[ X_{j_1} \cdots X_{j_{q}} ]  \Big\}	\nonumber \\
	&=  \left\{\begin{array}{ll} 	 
	        \displaystyle\lim_{n\to\infty} \frac{1}{n^{\frac{p+q}{2}-1}} \sum_{r=1}^{ \min\{ \frac{p}{2},\frac{q}{2} \} } \sum_{I_{2r}} \big( \E[X_{i_1} \cdots X_{i_{p}} X_{j_1} \cdots X_{j_{q}} ]\\
	\quad -\E[ X_{i_1} \cdots X_{i_{p}} ] \E[ X_{j_1} \cdots X_{j_{q}} ]  \big) & \text{if}\ p,  q \mbox{ both are even,}\\\\
		 	 \displaystyle\lim_{n\to\infty}\frac{1}{n^{\frac{p+q}{2}-1}} \sum_{r=0}^{ \min\{ \frac{p-1}{2},\frac{q-1}{2} \} } \sum_{I_{2r+1}} \big( \E[X_{i_1} \cdots X_{i_{p}} X_{j_1} \cdots X_{j_{q}} ]\\
	\qquad -\E[ X_{i_1} \cdots X_{i_{p}} ] \E[ X_{j_1} \cdots X_{j_{q}} ]  \big) & \text{if}\ p,  q \mbox{ both are odd,}\\\\
			0 &  \text{otherwise}. 	 	 
		 	  \end{array}\right.			 	  
	\end{align}
Now we calculate right hand side of (\ref{eqn:lim_cov_SC}). Depending on values of $p,q$, following two subcases arise.\\\\
	\noindent \textbf{subcase (i)} \textbf{$p,q$ both are even:} 
	First recall, the typical term of $I_{2r}$ is 
	$$((d_1, d_2,\ldots, d_{2r}, i_{2r+1}, \ldots, i_{p}), (d_1,d_2,\ldots, d_{2r}, j_{2r+1}, \ldots, j_{q})).$$
	For such an element of $I_{2r}$, the number of free entries in $I_{2r}$ will be maximum, if following conditions hold
	\begin{enumerate}
		\item [(i)] $ \{d_1, d_2, \ldots, d_{2r}\} \cap \{i_{2r+1},\ldots,i_{p}\}\cap \{j_{2r+1},\ldots,j_{q}\}=\emptyset$, 
		\item [(ii)] $(i_{2r+1}, \ldots, i_{p})$ and $(j_{2r+1}, \ldots, j_{q})$ are {\it opposite sign pair matched}.
	\end{enumerate}
	Due to the above consideration, the constraints, $\sum_{t=1}^{2r}\epsilon_t d_t +\sum_{t=2r+1}^{p}\epsilon_t i_t =0\; \mbox{(mod n})$ and $\sum_{t=1}^{2r}\epsilon_t d_t  + \sum_{t=2r+1}^{q}\epsilon_t j_t =0\; \mbox{(mod n})$ will change into one constraint 
\begin{equation}\label{eqn:constraint_d_even}
\sum_{t=1}^{2r}\epsilon_t d_t =0\; \mbox{(mod n}).
\end{equation}	
Now first we consider $r\geq 2$, later we shall deal $r=1$ case.
	Note that for $r=2,3, \ldots, \min\{p,q\}$, if we assume each entries of $ \{d_1, d_2, \ldots, d_{2r}\}$ are distinct, then cardinality of $I_{2r}$ will be of the order $O(n^{2r-1 +\frac{p-2r}{2} +\frac{q-2r}{2} })= O(n^{\frac{p+q}{2} -1}),$ where $(-1)$ is arising due to (\ref{eqn:constraint_d_even}).  In any other situation,
	cardinality of $I_{2r}$ will be $o(n^{p+q-1}).$ Also note that, as each entries of $ \{d_1, d_2, \ldots, d_{2r}\}$ are distinct, therefore
	$$ \E[ X_{i_1} \cdots X_{i_{p}} ] \E[ X_{j_1} \cdots X_{j_{q}} ] =0. $$
	Hence for each fixed $r\geq 2$, first part of (\ref{eqn:lim_cov_SC}) ($p,q$ both even) will be 
	\begin{align} \label{eqn:I_2k2_SC}
	&  \lim_{n\to\infty}  \frac{1}{n^{\frac{p+q}{2}-1}} \sum_{I_{2r}}  \E[X_{i_1} \cdots X_{i_{p}} X_{j_1} \cdots X_{j_{q}} ] \nonumber \\
	& = \lim_{n\to\infty} \frac{1}{n^{\frac{p+q}{2}-1}}  a_r  (n/2)^{\frac{p-2r}{2}+\frac{q-2r}{2}} \sum_{A_{2r}, A_{2r}}  \E[X_{i_1} \cdots X_{i_{2r}} X_{j_1} \cdots X_{j_{2r}} ] \nonumber \\
	& =  \frac{a_r}{2^{\frac{p+q-4r}{2}}  } \lim_{n\to\infty} \frac{1}{n^{2r-1}}  \sum_{A_{2r}, A_{2r}}  \E[X_{i_1} \cdots X_{i_{2r}} X_{j_1} \cdots X_{j_{2r}} ],
	\end{align}
	where $ a_r=	\binom{p}{p-2r}\binom{p-2r}{\frac{p-2r}{2}} (\frac{p-2r}{2})!\binom{q}{q-2r}\binom{q-2r}{\frac{q-2r}{2}} (\frac{q-2r}{2})!$.
 $a_r$ factor is arising for pair-matching of $(p-2r)$ many variables in $(i_1,i_2,\ldots,i_{p})$ and  $(j_1,j_2,\ldots, j_{q})$ both  with opposite sign. In $(i_1,i_2,\ldots,i_{p})$, we can choose $(p-2r)$ variables in $\binom{p}{p-2r}$ many ways. Out of $(p-2r)$ variables, $(\frac{p-2r}{2})$ many variables  can be chosen with positive sign in $\binom{p-2r}{\frac{p-2r}{2}}$ many ways. After free choice of $(\frac{p-2r}{2})$ variables with positive sign, rest of the $(\frac{p-2r}{2})$ variables with negative sign can be chosen in $(\frac{p-2r}{2})!$ ways. Therefore for pair matching of $(p-2r)$ many variables  in $(i_1,i_2,\ldots,i_{p})$ with opposite sign, we get $(\binom{p}{p-2r}\binom{p-2r}{\frac{p-2r}{2}} (\frac{p-2r}{2})!)$ factor. Similarly from $(j_1,j_2,\ldots,j_{q})$, we get $(\binom{q}{q-2r}\binom{q-2r}{\frac{q-2r}{2}} (\frac{q-2r}{2})!)$ factor.  
Now from \eqref{eqn:I_2k2_SC}, we get

\begin{align*}
\lim_{n\to\infty}  \frac{1}{n^{\frac{p+q}{2}-1}} \sum_{I_{2r}}  \E[X_{i_1} \cdots X_{i_{p}} X_{j_1} \cdots X_{j_{q}} ]
& = \frac{a_r}{2^{\frac{p+q-4r}{2}}  } \lim_{n\to\infty} \frac{1}{n^{2r-1}}  \sum_{A_{2r}, A_{2r}}  \E[X_{i_1} \cdots X_{i_{2r}} X_{j_1} \cdots X_{j_{2r}} ] \\
&=\frac{a_r}{2^{\frac{p+q-4r}{2}}  } \lim_{n\to \infty}\frac{1}{n^{2r-1}}\sum_{s=0}^{2r}\binom{2r}{s}^2 s!(2r-s)! |A_{2r}'^{(s)}|\\
&= \frac{a_r}{2^{\frac{p+q-4r}{2}}  } \sum_{s=0}^{2r}\binom{2r}{s}^2 s!(2r-s)! \lim_{n\to \infty}\frac{|A_{2r}^{(s)}|}{n^{2r-1}}. 
\end{align*}
The factor $\binom{2r}{s}^2$ appeared because in $\binom{2r}{s}$ ways we can choose $s$ many $+1$ from $\{\epsilon_1,\ldots,\epsilon_{2r}\}$ in one $A_{2r}'$. The factor $(s!(2r-s)!)$ appeared because for each choice of $(i_1,\ldots,i_{2r})$ we have  $(s!(2r-s)!)$ many choice for $(j_1,\ldots,j_{2r})$. 
Now using Result \ref{result:def_h} in right side of the last above equality, we get
\begin{align}\label{eqn:p,q_even1}
\lim_{n\to\infty}  \frac{1}{n^{\frac{p+q}{2}-1}} \sum_{I_{2r}}  \E[X_{i_1} \cdots X_{i_{p}} X_{j_1} \cdots X_{j_{q}} ]
&= \frac{a_r}{2^{\frac{p+q-4r}{2}}  } \sum_{s=0}^{2r}\binom{2r}{s}^2 s!(2r-s)! \ h_{2r}(s),
\end{align}	
where $h_{2r}(s)$ is defined in Result \ref{result:def_h}.

Now we calculate first part of (\ref{eqn:lim_cov_SC}) ($p,q$ both even) for $r=1$.
	Note that, if $r=1$ in this Case, then from (\ref{eqn:constraint_d_even}), we get $d_1=d_2$, and hence 
	\begin{align}\label{eqn:p,q_even2}
\lim_{n\to\infty}  \frac{1}{n^{\frac{p+q}{2}-1}} \sum_{I_{2}}  \E[X_{i_1} \cdots X_{i_{p}} X_{j_1} \cdots X_{j_{q}} ]- \E[ X_{i_1} \cdots X_{i_{p}} ] \E[ X_{j_1} \cdots X_{j_{q}} ]
&= \frac{a_1}{2^{\frac{p+q-4}{2}}  } (\E X^4_1- (\E X^2_1)^2) \nonumber \\
&= \frac{a_1}{2^{\frac{p+q-4}{2}}  } (\E X^4_1- 1),
\end{align}
where $a_1=(\binom{p}{p-2}\binom{p-2}{\frac{p-2}{2}} (\frac{p-2}{2})!)(\binom{q}{q-2}\binom{q-2}{\frac{q-2}{2}} (\frac{q-2}{2})!)$. Therefore from (\ref{eqn:lim_cov_SC}), (\ref{eqn:p,q_even1}) and (\ref{eqn:p,q_even2}), we get
\begin{align} \label{eqn:p,q_even}
  \lim_{n\to\infty} &\frac{1}{n^{\frac{p+q}{2}-1}} \sum_{r=1}^{ \min\{ \frac{p}{2},\frac{q}{2} \} } \sum_{I_{2r}} \big( \E[X_{i_1} \cdots X_{i_{p}} X_{j_1} \cdots X_{j_{q}} ] -\E[ X_{i_1} \cdots X_{i_{p}} ] \E[ X_{j_1} \cdots X_{j_{q}} ]  \big)  \\
  &= \frac{a_1}{2^{\frac{p+q-4}{2}}  } (\E X^4_1- 1) + \sum_{r=2}^{ \min\{ \frac{p}{2},\frac{q}{2} \} } \frac{a_r}{2^{\frac{p+q-4r}{2}}  } \sum_{s=0}^{2r}\binom{2r}{s}^2 s!(2r-s)! \ h_{2r}(s).\nonumber
\end{align}
	
	\noindent \textbf{subcase II.} \textbf{$p,q$ both are odd:} In this case we calculate right hand side of (\ref{eqn:lim_cov_SC}) for odd value of $p, q$ and $m$. If $m=2r+1$ for $r=0,1, \ldots, \min\{\frac{p-1}{2}, \frac{q-1}{2}\}$, then the typical term of $I_{2r+1}$ looks like
	$$((d_1, d_2,\ldots, d_{2r+1}, i_{2r+2}, \ldots, i_{p}), (d_1,d_2,\ldots, d_{2r+1}, j_{2r+2}, \ldots, j_{q}))$$
	and for such an elements of $I_{2r+1}$, the number of free entries in $I_{2r+1}$ will be maximum, if following conditions hold
	\begin{enumerate}
	\item [(i)] each entries of $ \{d_1, d_2, \ldots, d_{2r+1}\}$ are distinct,
		\item [(ii)] $ \{d_1, d_2, \ldots, d_{2r+1}\} \cap \{i_{2r+2},\ldots,i_{p}\}\cap \{j_{2r+2},\ldots,j_{q}\}=\emptyset$,
		\item [(iii)] $(i_{2r+2}, \ldots, i_{p})$ and $(j_{2r+2}, \ldots, j_{q})$ are {\it opposite sign pair matched},
	\end{enumerate}
%
  and the contribution will be of the order $O(n^{2r+1-1 +\frac{p-2r-1}{2} +\frac{q-2r-1}{2} })= O(n^{\frac{p+q}{2} -1})$, where $(-1)$ is arising due to the constraint, $\sum_{t=1}^{2r+1}\epsilon_t d_t =0\; \mbox{(mod n}).$ In any other situation, the cardinality of $I_{2r+1}$ will be $o(n^{p+q-1}).$ Since each entries of $ \{d_1, d_2, \ldots, d_{2r+1}\}$ are distinct, as (ii) holds. Therefore
	$$ \E[ X_{i_1} \cdots X_{i_{p}} ] \E[ X_{j_1} \cdots X_{j_{q}} ] =0. $$
	Now by the similar calculations as we have done in Case I, second part of (\ref{eqn:lim_cov_SC}) ($p,q$ both odd) will be 
	\begin{align} \label{eqn:p,q_odd}
    &\lim_{n\to\infty} \frac{1}{n^{\frac{p+q}{2}-1}} \sum_{r=0}^{ \min\{ \frac{p-1}{2},\frac{q-1}{2} \} } \sum_{I_{2r+1}} \big( \E[X_{i_1} \cdots X_{i_{p}} X_{j_1} \cdots X_{j_{q}} ] -\E[ X_{i_1} \cdots X_{i_{p}} ] \E[ X_{j_1} \cdots X_{j_{q}} ]  \big) \nonumber\\
	&  = \lim_{n\to\infty}  \frac{1}{n^{\frac{p+q}{2}-1}} \sum_{r=0}^{ \min\{ \frac{p-1}{2},\frac{q-1}{2} \} } \sum_{I_{2r+1}}  \E[X_{i_1} \cdots X_{i_{p}} X_{j_1} \cdots X_{j_{q}} ]  \nonumber\\
	&  = \sum_{r=0}^{ \min\{ \frac{p-1}{2},\frac{q-1}{2} \} } \frac{b_r}{2^{\frac{p+q-4r-2}{2}}  } \sum_{s=0}^{2r+1}\binom{2r+1}{s}^2 s!(2r+1-s)! \ h_{2r+1}(s) ,
	\end{align}
	 where $h_{2r+1}(s)$ is defined in Result \ref{result:def_h} and $b_r=	\binom{p}{p-2r-1}\binom{p-2r-1}{\frac{p-2r-1}{2}} (\frac{p-2r-1}{2})!\binom{q}{q-2r-1}\binom{q-2r-1}{\frac{q-2r-1}{2}} (\frac{q-2r-1}{2})!.$
	 
Now, after combining both the sub-cases  I and II, using (\ref{eqn:p,q_even}) and (\ref{eqn:p,q_odd}) in (\ref{eqn:lim_cov_SC}, we get 
\begin{align} \label{eqn:cov_k=p_odd}
	&\lim_{n\to\infty} \Cov\big(w_p,w_q\big) \nonumber \\ 
	&=  \left\{\begin{array}{ll} 	 
		 	\displaystyle\frac{a_1}{2^{\frac{p+q-4}{2}}  } (\E X^4_1- 1) + \sum_{r=2}^{ \min\{ \frac{p}{2},\frac{q}{2} \} } \frac{a_r}{2^{\frac{p+q-4r}{2}}  } \sum_{s=0}^{2r}\binom{2r}{s}^2 s!(2r-s)! \ h_{2r}(s) & \text{if}\ p, \mbox{} q \mbox{ both are even},\\\\
		 	\displaystyle\sum_{r=0}^{ \min\{ \frac{p-1}{2},\frac{q-1}{2} \} }  \frac{b_r}{2^{\frac{p+q-4r-2}{2}}  } \sum_{s=0}^{2r+1}\binom{2r+1}{s}^2 s!(2r+1-s)! \ h_{2r+1}(s)  & \text{if}\  p, \mbox{} q \mbox{ both are odd},\\\\
			0 & \text{otherwise}. 	 	 
		 	  \end{array}\right.			 	  
	\end{align}
	\noindent \textbf{Case II.} \textbf{$k<p$ and $\ell<q$ :} First recall $\Cov\big(w_p,w_q\big)$ from (\ref{eqn:T_1+T_2_SC}) for $k<p$ and $\ell<q$
		\begin{align*}
		\Cov\big(w_p,w_q\big) \mathbb I_{\{k<p,\ell<q\}}&=\frac{1}{n^{\frac{p+q}{2}-1}} \sum_{k, \ell =0}^{p-1, q-1}\binom{p}{k} \binom{q}{\ell}  \sum_{A_{k}, A_{\ell}} \Big\{\E[ X_0^{p+q-k-\ell}] \E[X_{i_1} \cdots X_{i_{k}} X_{j_1} X_{j_2}\cdots X_{j_{\ell}} ]\nonumber \\
	 &\qquad -\E[X_0^{p-k}]\E[ X_{i_1} \cdots X_{i_{k}} ] \E[X_0^{q-\ell}]\E[ X_{j_1} \cdots X_{j_{\ell}} ]   \Big\}.
\end{align*}			 
	Similar to Case I, we get maximum contribution when $\{i_1,i_2,\ldots,i_{k}\}\cap \{j_1,j_2,\ldots,j_{\ell}\}=\emptyset$ and $(i_1,i_2,\ldots,i_{k}), (j_1,j_2,\ldots,j_{\ell})$ are {\it opposite sign pair matched}. Since from (\ref{eqn:condition}) we have that all moments are bounded, therefore
	\begin{align} \label{eqn:cov_k<p}
	\sum_{A_{k}, A_{\ell}} \E[ X_0^{p+q-k-\ell}] \E[ X_{I_{k}}  X_{J_{\ell}} ] -\E[X_0^{p-k}]\E[X_{I_{k}} ] \E[X_0^{q-\ell}]\E[ X_{J_{\ell}} ] = O(n^{[\frac{k}{2}]+ [\frac{\ell}{2} ]}).
	\end{align}
	 Now using (\ref{eqn:cov_k<p}) and the fact that $\E(X_i)=0$ for each $i=1,2, \ldots,$ we get 
	 \begin{align} \label{eqn:cov_k<p_odd}
	\lim_{n\to\infty} & \Cov\big(w_p,w_q\big) \mathbb I_{\{k<p,\ell<q\}}  \nonumber \\
	   &= \left\{\begin{array}{ll} 	 \nonumber
		 	\displaystyle\lim_{n\to\infty} \frac{pq}{n^{\frac{p+q}{2}-1}}  \sum_{A_{p-1}, A_{q-1}} \E[ X_0^{2}] \E[X_{i_1} \cdots X_{i_{p-1}} X_{j_1} X_{j_2}\cdots X_{j_{q-1}} ] & \text{if}\ p, \mbox{} q \mbox{ both are odd},\\\\
			0 & \text{otherwise}, 	 	 
		 	  \end{array}\right.	\\ 
		 &= \left\{\begin{array}{ll} 
		 	 \displaystyle pq \binom{p-1}{(p-1)/2}\binom{q-1}{(q-1)/2} \l((p-1)/2\r)! \l((q-1)/2\r)!\frac{1}{2^{(\frac{p+q}{2}-1)}} & \text{if}\ p,  q \mbox{ both are odd},\\\\
		 	 0 & \text{otherwise}, 	 	 
		 	  \end{array}\right.	
	 \end{align}
	where the factor $\binom{p-1}{(p-1)/2} \binom{q-1}{(q-1)/2}$ appeared because in $\binom{p-1}{(p-1)/2}$ many ways $\sum_{k=1}^{p-1} \epsilon_k=0$ in one $A_{p-1}$ and $\binom{q-1}{(q-1)/2}$ many ways $\sum_{k=1}^{q-1} \epsilon_k=0$ in $A_{q-1}$.  $\l(\frac{p-1}{2}\r)!$ appeared because for each free choice of  $(p-1)/2$  variables among  $\{i_1,\ldots,i_{p-1}\}$ with positive sign, we can choose rest of the $(p-1)/2$  variables with negative sign in $\l(\frac{p-1}{2}\r)!$ ways to have pair matching. Using  same argument for  $\{j_1,\ldots,j_{q-1}\}$, we get another $\l(\frac{q-1}{2}\r)!$ factor. $\frac{1}{2^{(\frac{p+q}{2}-1)}}$ arises because $1\leq i_k,j_k\leq n/2$.  
	
	Now combining both the Cases, from (\ref{eqn:cov_k=p_odd}), (\ref{eqn:cov_k<p_odd}), we get 
 \begin{align} \label{eqn:cov_odd}
	\lim_{ \substack{{n\to\infty} \\ {n \mbox{ odd} } }} & \Cov\big(w_p,w_q\big) \nonumber \\ 
	&=  \left\{\begin{array}{ll} 	 
		 	\displaystyle\frac{a_1}{2^{\frac{p+q-4}{2}}  } (\E X^4_1- 1) + \sum_{r=2}^{ \min\{ \frac{p}{2},\frac{q}{2} \} } \frac{a_r}{2^{\frac{p+q-4r}{2}}  } \sum_{s=0}^{2r}\binom{2r}{s}^2 s!(2r-s)! \ h_{2r}(s) & \text{if}\ p, \mbox{} q \mbox{ both are even},\\\\
		 	\displaystyle\sum_{r=0}^{ \min\{ \frac{p-1}{2},\frac{q-1}{2} \} }  \frac{b_r}{2^{\frac{p+q-4r-2}{2}}  } \sum_{s=0}^{2r+1}\binom{2r+1}{s}^2 s!(2r+1-s)! \ h_{2r+1}(s) \\		 	
		\displaystyle + pq \binom{p-1}{(p-1)/2}\binom{q-1}{(q-1)/2} \l((p-1)/2\r)! \l((q-1)/2\r)!\frac{1}{2^{(\frac{p+q}{2}-1)}} 	 & \text{if}\ p, \mbox{} q \mbox{ both are odd},\\\\
			0 & \text{otherwise}.	 	 
		 	  \end{array}\right.			 	  
	\end{align}
	
\noindent \textbf{Step 2.} Suppose $n$ is even, then by the trace formula (\ref{trace formula_SC_even}), we get
	\begin{align} \label{eqn:cov_even}
	& \lim_{n\to\infty}  \Cov\big(w_p,w_q\big) \nonumber \\
	  &= \lim_{n\to\infty} \frac{1}{4n^{\frac{p+q}{2}-1}} \sum_{k, \ell =0}^{p, q}\binom{p}{k} \binom{q}{\ell}   \Big[ \E \{ (Y_k  \sum_{A_k} X_{I_{k}} + \tilde{Y}_k \sum_{ \tilde{A}_k}  X_{I_{k}}) ( Y_\ell  \sum_{A_\ell} X_{J_{\ell}} + \tilde{Y}_\ell \sum_{ \tilde{A}_\ell}  X_{J_{\ell}} ) \}  \nonumber\\
	  &\qquad -   \E[(Y_k  \sum_{A_k} X_{I_{k}} + \tilde{Y}_k \sum_{ \tilde{A}_k}  X_{I_{k}} ]  \E[Y_\ell  \sum_{A_\ell} X_{J_{\ell}} + \tilde{Y}_\ell \sum_{ \tilde{A}_\ell}  X_{J_{\ell}}] \Big] \nonumber \\
&= \lim_{n\to\infty} \frac{1}{4n^{\frac{p+q}{2}-1}} \sum_{k, \ell =0}^{p, q}\binom{p}{k} \binom{q}{\ell}   \Big[ \E  (Y_k Y_\ell \sum_{A_k,A_\ell} X_{I_{k}} X_{J_{\ell}}) + \E (\tilde{Y}_k Y_\ell \sum_{ \tilde{A}_k, A_\ell}  X_{I_{k}}  X_{J_{\ell}}) +\E( Y_k \tilde{Y}_\ell  \sum_{A_k, \tilde{A}_\ell} X_{I_{k}}  X_{J_{\ell}})  \\
&\qquad + \E( \tilde{Y}_k \tilde{Y}_\ell  \sum_{ \tilde{A}_k, \tilde{A}_\ell}  X_{I_{k}} X_{J_{\ell}} ) - \E[(Y_k  \sum_{A_k} X_{I_{k}} + \tilde{Y}_k \sum_{ \tilde{A}_k}  X_{I_{k}} ]  \E[Y_\ell  \sum_{A_\ell} X_{J_{\ell}} + \tilde{Y}_\ell \sum_{ \tilde{A}_\ell}  X_{J_{\ell}}] \Big]. \nonumber 
	\end{align}	 
 By the similar arguments as we have done in Step 1, we can show that right hand side of (\ref{eqn:cov_even}) has non-zero contribution only when $k=p, \ell=q$ with  $\{i_1,i_2,\ldots,i_{k}\}\cap \{j_1,j_2,\ldots,j_{\ell}\} \neq\emptyset$ and $k=p-1, \ell=q-1$ with  $\{i_1,i_2,\ldots,i_{k}\}\cap \{j_1,j_2,\ldots,j_{\ell}\}=\emptyset$. \\\
 
 \noindent \textbf{Case I.} \textbf{$k=p$ and $\ell=q$ :} First recall $Y_k$ and $\tilde{Y}_k$ from (\ref{eqn:Y_k})
 $$Y_k  = {(X_0 +  X_{\frac{n}{2}})}^{p-k} + {(X_0 -  X_{\frac{n}{2}})}^{p-k}, \ \ \tilde{Y}_k = {(X_0 +  X_{\frac{n}{2}})}^{p-k} - {(X_0 -  X_{\frac{n}{2}})}^{p-k}. $$
 Since for $k=p$ and $\ell=q, Y_p=Y_q= 2$ and $\tilde{Y}_p=\tilde{Y}_q=0$. Therefore in this case, (\ref{eqn:cov_even}) will be 
 \begin{align} \label{eqn:cov_even1}
 \lim_{n\to\infty} \Cov\big(w_p,w_q\big) &= \lim_{n\to\infty} \frac{1}{4n^{\frac{p+q}{2}-1}} \Big[ \E  (4\sum_{A_p,A_q} X_{I_{p}} X_{J_{q} }) - \E[ 2\sum_{A_p} X_{I_{p}}] \E[ 2\sum_{A_q} X_{J_{q}}] \Big] \nonumber \\
 &= \lim_{n\to\infty} \frac{1}{n^{\frac{p+q}{2}-1}}  \sum_{A_p,A_q} \E[X_{I_{p}} X_{J_{q}}] - \E[ X_{I_{p}}] \E[X_{J_{q}}].
 \end{align}
 Note that (\ref{eqn:cov_even1}) is same as (\ref{eqn:lim_cov_SC}). Therefore from (\ref{eqn:lim_cov_SC}) and (\ref{eqn:cov_k=p_odd}), (\ref{eqn:cov_even1})  will be
  \begin{align} \label{eqn:cov_k=p_even}
	& \lim_{n\to\infty}  \Cov\big(w_p,w_q\big) \nonumber \\ 
	&=  \left\{\begin{array}{ll} 	 
		 	\frac{a_1}{2^{\frac{p+q-4}{2}}  } (\E X^4_1- 1) + \sum_{r=2}^{ \min\{ \frac{p}{2},\frac{q}{2} \} } \frac{a_r}{2^{\frac{p+q-4r}{2}}  } \sum_{s=0}^{2r}\binom{2r}{s}^2 s!(2r-s)! \ h_{2r}(s) & \text{if}\ p, \mbox{} q \mbox{ both are even},\\\\
		 	\sum_{r=0}^{ \min\{ \frac{p-1}{2},\frac{q-1}{2} \} }  \frac{b_r}{2^{\frac{p+q-4r-2}{2}}  } \sum_{s=0}^{2r+1}\binom{2r+1}{s}^2 s!(2r+1-s)! \ h_{2r+1}(s)  & \text{if}\ p, \mbox{} q \mbox{ both are odd},\\\\
			0 & \text{otherwise}. 	 	 
		 	  \end{array}\right.			 	  
	\end{align}\\\
 
 \noindent \textbf{Case II.} \textbf{$k=p-1$ and $\ell=q-1$ :} 
 Since for $k=p-1$ and $\ell=q-1, Y_{p-1}=Y_{q-1}= 2X_0$ and $\tilde{Y}_{p-1}=\tilde{Y}_{q-1}=2X_{\frac{n}{2}}$. Therefore in this case,  (\ref{eqn:cov_even}) will be 
 \begin{align} \label{eqn:cov,even,k<p}
 & \lim_{n\to\infty}  \Cov\big(w_p,w_q\big)  \\
 &= \lim_{n\to\infty} \frac{1}{4n^{\frac{p+q}{2}-1}} p q\Big[ \E  (4X_0^2 \sum_{A_{p-1},A_{q-1}} X_{I_{p-1}} X_{J_{q-1}}) + \E ( 4 X_0 X_{\frac{n}{2}} \sum_{ \tilde{A}_{p-1}, A_{q-1}}  X_{I_{p-1}} X_{J_{q-1}} ) \nonumber \\
&\qquad  +\E( 4 X_0 X_{\frac{n}{2}}  \sum_{A_{p-1}, \tilde{A}_{q-1}}  X_{I_{p-1}} X_{J_{q-1}} ) + \E( 4 (X_{\frac{n}{2}})^2 \sum_{ \tilde{A}_{p-1}, \tilde{A}_{q-1}}  X_{I_{p-1}} X_{J_{q-1}} ) \nonumber \\
& \qquad  - \E[(2X_0\sum_{A_{p-1}} X_{I_{p-1}} + 2X_{\frac{n}{2}} \sum_{ \tilde{A}_{p-1}}  X_{I_{p-1}} ]  \E[ 2X_0\sum_{A_{q-1}} X_{J_{q-1}} + 2X_{\frac{n}{2}} \sum_{ \tilde{A}_{q-1}}  X_{J_{q-1}}] \Big]. \nonumber 
 \end{align}
 	In this case, we get non-zero contribution when $\{i_1,i_2,\ldots,i_{p-1}\}\cap \{j_1,j_2,\ldots,j_{q-1}\}=\emptyset$ and $(i_1,i_2,\ldots,i_{p-1}), (j_1,j_2,\ldots,j_{q-1})$ are {\it opposite sign pair matched}. Since from (\ref{eqn:condition}) we have that all moments are bounded, therefore
	\begin{align} \label{eqn:cov_k<p,even}
	\E[ X_0^2 \sum_{A_{p-1},A_{q-1}} X_{I_{p-1}} X_{J_{q-1}}] & = O(n^{[\frac{p-1}{2}]+ [\frac{q-1}{2} ]}), \\
   \E [ X_0 X_{\frac{n}{2}} \sum_{ \tilde{A}_{p-1}, A_{q-1}}  X_{I_{p-1}} X_{J_{q-1}} ] & = O(n^{[\frac{p-2}{2}]+ [\frac{q-1}{2} ]}), \nonumber \\
   \E[ X_0 X_{\frac{n}{2}}  \sum_{A_{p-1}, \tilde{A}_{q-1}}  X_{I_{p-1}} X_{J_{q-1}} & = O(n^{[\frac{p-2}{2}]+ [\frac{q-1}{2} ]}),  \nonumber  \\
   \E[ (X_{\frac{n}{2}})^2 \sum_{ \tilde{A}_{p-1}, \tilde{A}_{q-1}}  X_{I_{p-1}} X_{J_{q-1}} ] & = O(n^{[\frac{p-2}{2}]+ [\frac{q-2}{2} ]}).   \nonumber 
	\end{align}
 Now using (\ref{eqn:cov_k<p,even}) and the fact that $\E(X_i)=0$ for each $i=1,2, \ldots$ in (\ref{eqn:cov,even,k<p}), we get 
	 \begin{align} \label{eqn:cov,k<p,even1}
	&\lim_{n\to\infty}  \Cov\big(w_p,w_q\big) \\
	   &= \left\{\begin{array}{ll} 	 \nonumber 
		 	\displaystyle{\lim_{n\to\infty}} \frac{pq}{n^{\frac{p+q}{2}-1}}  \sum_{A_{p-1}, A_{q-1}} \E[ X_0^{2}] \E[X_{i_1} \cdots X_{i_{p-1}} X_{j_1} X_{j_2}\cdots X_{j_{q-1}} ] & \text{if}\ p, \mbox{} q \mbox{ both are odd},\\\\
			0 & \text{otherwise}. 	 	 
		 	  \end{array}\right.	
	 \end{align}
Note that right hand side of (\ref{eqn:cov,k<p,even1}) is same as (\ref{eqn:cov_k<p_odd}). Therefore (\ref{eqn:cov,k<p,even1})  will be

	 \begin{align} \label{eqn:cov_k<p_even}
	& \lim_{n\to\infty}  \Cov\big(w_p,w_q\big)   \\ 
		 &= \left\{\begin{array}{ll}\nonumber 
		 	\displaystyle pq \binom{p-1}{(p-1)/2}\binom{q-1}{(q-1)/2} \l((p-1)/2\r)! \l((q-1)/2\r)!\frac{1}{2^{(\frac{p+q}{2}-1)}} & \text{if}\ p, \mbox{} q \mbox{ both are odd},\\\\
		 	 0 & \text{otherwise}. 	 	 
		 	  \end{array}\right.	
	 \end{align}
Now combining both the Cases, from (\ref{eqn:cov_k=p_even}), (\ref{eqn:cov_k<p_even}), we get
 \begin{align} \label{eqn:cov_even,2}
&\lim_{ \substack{{n\to\infty},\\ {n \mbox{ even} } }}  \Cov\big(w_p,w_q\big) \nonumber \\ 
&=  \left\{\begin{array}{ll} 	 
		 \displaystyle	\frac{a_1}{2^{\frac{p+q-4}{2}}  } (\E X^4_1- 1) + \sum_{r=2}^{ \min\{ \frac{p}{2},\frac{q}{2} \} } \frac{a_r}{2^{\frac{p+q-4r}{2}}  } \sum_{s=0}^{2r}\binom{2r}{s}^2 s!(2r-s)! \ h_{2r}(s) & \text{if}\ p, \mbox{} q \mbox{ both are even},\\\\
		 \displaystyle	\sum_{r=0}^{ \min\{ \frac{p-1}{2},\frac{q-1}{2} \} }  \frac{b_r}{2^{\frac{p+q-4r-2}{2}}  } \sum_{s=0}^{2r+1}\binom{2r+1}{s}^2 s!(2r+1-s)! \ h_{2r+1}(s) \\		 	
	\displaystyle	+ pq \binom{p-1}{(p-1)/2}\binom{q-1}{(q-1)/2} \l((p-1)/2\r)! \l((q-1)/2\r)!\frac{1}{2^{(\frac{p+q}{2}-1)}} 	 & \text{if}\ p, \mbox{} q \mbox{ both are odd},\\\\
			0 & \text{otherwise}. 	 	 
		 	  \end{array}\right.			 	  
	\end{align}	
	Now combining both the Steps, from (\ref{eqn:cov_odd}) and (\ref{eqn:cov_even,2}), we get that for both the odd and even value of $n, \lim_{n\to\infty}  \Cov\big(w_p,w_q\big)$ is same. Therefore 
	  \begin{align} \label{eqn:cov}
	&\lim_{n\to\infty}  \Cov\big(w_p,w_q\big) \nonumber \\ 
	&=  \left\{\begin{array}{ll} 	 
		 	\displaystyle \frac{a_1}{2^{\frac{p+q-4}{2}}  } (\E X^4_1- 1) + \sum_{r=2}^{ \min\{ \frac{p}{2},\frac{q}{2} \} } \frac{a_r}{2^{\frac{p+q-4r}{2}}  } \sum_{s=0}^{2r}\binom{2r}{s}^2 s!(2r-s)! \ h_{2r}(s) & \text{if}\ p, \mbox{} q \mbox{ both are even},\\\\
		\displaystyle 	\sum_{r=0}^{ \min\{ \frac{p-1}{2},\frac{q-1}{2} \} }  \frac{b_r}{2^{\frac{p+q-4r-2}{2}}  } \sum_{s=0}^{2r+1}\binom{2r+1}{s}^2 s!(2r+1-s)! \ h_{2r+1}(s) \\		 	
	\displaystyle	+ pq \binom{p-1}{(p-1)/2}\binom{q-1}{(q-1)/2} \l((p-1)/2\r)! \l((q-1)/2\r)!\frac{1}{2^{(\frac{p+q}{2}-1)}} 	 & \text{if}\ p, \mbox{} q \mbox{ both are odd},\\\\
			0 & \text{otherwise}. 	 	 
		 	  \end{array}\right.			 	  
	\end{align}	
This complete the proof of the theorem \ref{thm:symcircovar}.\end{proof}

\section{Proof of Theorem \ref{thm:symcirpoly}}\label{sec:poly}

First we begin with some notation and definitions. Recall $A_{p}$ and $\tilde{A}_p$ from \eqref{def:A_p_SC} in Section \ref{sec:poly},
\begin{align*}
A_{p} & =\{(j_1,\ldots,j_{p})\suchthat \sum_{i=1}^{p}\epsilon_i j_i=0\; \mbox{(mod n)}, \epsilon_i\in\{+1,-1\}, 1\le j_1,\ldots,j_{p}\le \frac{n}{2}\}, \\
\tilde{A}_{k} &=\{(j_1,\ldots,j_{k})\suchthat \sum_{i=1}^{k}\epsilon_i j_i=0\; \mbox{(mod } \frac{n}{2}) \mbox{ and } \sum_{i=1}^{k}\epsilon_i j_i \neq 0\; \mbox{(mod }n),  \epsilon_i\in\{+1,-1\}, 1\le j_1,\ldots,j_{k}\le \frac{n}{2}\}.
\end{align*} 

For a  vector  $ J = (j_1, j_2, \ldots, j_{p})\in A_{p} \mbox{ or } \tilde{A}_p,$ we define a multi-set $S_J$ as 
\begin{equation}\label{def:S_j}
S_{J} = \{j_1, j_2, \ldots,j_{p}\}.
\end{equation}
\begin{definition}\label{def:connected}
	Two vectors $J =(j_1, j_2, \ldots, j_{p})$ and $J' = (j'_1, j'_2, \ldots, j'_{p})$, where $J \in A_{p}$ and $J' \in A_{q}$, are said to be \textit{connected} if
	$S_{J}\cap S_{J'} \neq \emptyset$.	
\end{definition}
%
\begin{definition}\label{def:cluster}
	Given a set of vectors $S= \{J_1, J_2, \ldots, J_\ell \}$, where $J_i \in A_{p_i}$ for $1 \leq i \leq \ell$, a subset $T=\{J_{n_1}, J_{n_2}, \ldots, J_{n_k}\}$ of $S$ is called a \textit{cluster} if it satisfies the following two conditions: 
	\begin{enumerate}
		\item[(i)] For any pair $J_{n_i}, J_{n_j}$ from  $T$ one can find a chain of vectors from 
		$T$, which starts with $J_{n_i}$ and ends with $J_{n_j}$ such that any two neighbouring vectors in the chain are connected.
		\item[(ii)] The subset $\{J_{n_1}, J_{n_2}, \ldots, J_{n_k}\}$ can not  be enlarged to a subset which preserves condition (i).
	\end{enumerate}
\end{definition}
For more details about cluster, we refer the readers to Definition $12$ of \cite{maurya2019process}, where the authors have explained the structure of cluster by using graph.
%
%

Now we define a subset $B_{P_\ell}$ of the Cartesian product  $ A_{p_1} \times A_{p_2} \times \cdots \times A_{p_\ell}$ where $A_{p_i}$ is as defined in \eqref{def:A_p_SC}.  
\begin{definition}\label{def:B_{P_l}}
	Let $\ell \geq 2$ and  $P_\ell = (p_1,p_2, \ldots, p_\ell ) $. Now $ B_{P_\ell}$ is a subset of $ A_{p_1} \times A_{p_2} \times \cdots \times A_{p_\ell}$ such that  $ (J_1, J_2, \ldots, J_\ell) \in B_{P_\ell} $ if 
	\begin{enumerate} 
		\item[(i)] $\{J_1, J_2, \ldots, J_\ell\} $ form a cluster, 
		\item[(ii)] each element in  $\displaystyle{\cup_{i=1}^{\ell} S_{J_i} }$ has  multiplicity greater than or equal to two. 
	\end{enumerate}
\end{definition} 
 The next lemma gives us the cardinality of $B_{P_\ell}$.
\begin{lemma}\label{lem:cluster}
	For  $\ell \geq 3 $, 
	\begin{equation}\label{equation:cluster}
	|B_{P_\ell }| = o \big(n^{\frac{p_1+p_2 + \cdots + p_\ell -\ell}{2} }\big).
	\end{equation}
\end{lemma}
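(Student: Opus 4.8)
The plan is to bound $|B_{P_\ell}|$ by the number of admissible assignments of \emph{distinct} values and to convert the statement into a purely combinatorial inequality. Writing $P = p_1 + \cdots + p_\ell$ and letting $v$ denote the number of distinct values occurring among all the coordinates of $(J_1,\ldots,J_\ell)$, I would first observe that once the matching pattern (which coordinates are equal) is fixed — a choice costing only an $n$-free constant — the surviving values range over $\{1,\ldots,n/2\}$ subject to the $\ell$ relations $\sum_i \epsilon_i j_i \equiv 0 \pmod n$, one per vector $J_t \in A_{p_t}$. If $r$ is the rank of this linear system (its coefficient matrix has bounded integer entries), then $r$ of the values are pinned modulo $n$ by the others and hence confined to $O(1)$ choices inside an interval of length $n/2$, so that $|B_{P_\ell}| = O(n^{\,v-r})$. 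The hypothesis that every value has multiplicity at least two gives $2v \le P$; introducing the excess $E := P - 2v \ge 0$ one has $v - r = \tfrac{P-E}{2} - r$, and therefore the assertion $|B_{P_\ell}| = o\big(n^{(P-\ell)/2}\big)$ is equivalent to the integer inequality
\[
2r + E \;\ge\; \ell + 1 \qquad (\ell \ge 3).
\]

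The heart of the proof is this inequality, which I would establish by induction on $\ell$ along a spanning tree of the connection graph (vertices $=$ the vectors $J_t$, edges recording shared values, connected because $\{J_1,\ldots,J_\ell\}$ is a cluster). For the base case $\ell = 2$ a connected pair satisfies $2r + E \ge 2$: either some constraint is non-trivial and $r \ge 1$, or both vectors are internally sign-balanced (zero constraint rows), in which case their shared value must occur at least twice in \emph{each} of them, forcing total multiplicity $\ge 4$ and hence $E \ge 2$. For the inductive step I would strip off a leaf $J_\ell$, which meets the remaining cluster in $a \ge 1$ shared coordinates. Because every value that is \emph{new} to $J_\ell$ still needs multiplicity at least two, those new values must already pair up inside $J_\ell$; hence $J_\ell$ contributes at most $(p_\ell - a)/2$ new distinct values, and a short computation gives
\[
\big(2r + E\big) - \big(2r' + E'\big) \;=\; 2(r-r') + p_\ell - 2\,\Delta v \;\ge\; a \;\ge\; 1 .
\]
The surplus needed to upgrade $\ge \ell$ to $\ge \ell+1$ comes from the observation that the deficient case $a=1$ with all new coordinates oppositely paired would reduce the constraint of $J_\ell$ to $\pm w \equiv 0 \pmod n$ for the bridge value $w \in \{1,\ldots,n/2\}$, which is unsatisfiable; so every configuration actually contributing to $B_{P_\ell}$ gains at least $2$ at some step, producing the strict inequality once $\ell \ge 3$.

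Two points are where I expect the real work to lie, and they should be treated carefully rather than waved through. The first is the rank bookkeeping in the trade-off between low rank and high multiplicity: degenerate sign patterns (internally balanced vectors, same-sign pairs giving a coefficient $2$, or constraint rows that are linearly dependent rather than zero) lower $r$, and one must verify each time that connectivity forces a compensating rise in $E$ through repeated or higher-multiplicity bridge values — this is exactly what keeps $2r+E$ from dropping. The second is the clash between leaf removal and the multiplicity hypothesis: deleting a leaf joined by a bridge value of total multiplicity exactly two leaves that value unmatched in the sub-cluster, so the induction hypothesis must be stated for collections permitting a controlled number of multiplicity-one \emph{open} values, with $E$ redefined accordingly. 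Finally, I note a parity simplification worth recording: when $P-\ell$ is odd the integer $v-r$ automatically satisfies $v - r \le \tfrac{P-\ell-1}{2} < \tfrac{P-\ell}{2}$, so the genuinely delicate case is $P-\ell$ even, where the strict bound $2r+E \ge \ell+1$ is indispensable.
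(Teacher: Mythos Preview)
The paper does not prove this lemma: it defers entirely to Lemma~15 of \cite{maurya2019process}, asserting only that ``the idea is same.'' So there is no in-paper argument to compare against directly. Your reduction to the inequality $2r+E\ge\ell+1$ (with $E=P-2v$, maximised over satisfiable matching/sign patterns) is the natural invariant, and the spanning-tree/leaf-stripping mechanism is in the same spirit as the referenced proof.

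Both of your flagged difficulties are genuine, and the second is a real gap as the proposal stands --- more pervasive, in fact, than you indicate. Stripping a leaf attached by a bridge value of total multiplicity two leaves that value with multiplicity one in the sub-cluster, so not only can the base-case bound $2r'+E'\ge2$ fail (since $E'$ may go negative), but the inductive-step estimate $\Delta v\le(p_\ell-a)/2$ can also break at \emph{subsequent} strippings: a value that looks ``new'' in the current leaf may have had its second copy in an already-removed vector, so it need not pair up inside the leaf, and even the increment $\ge1$ is no longer automatic. You therefore need to formulate and prove a strengthened statement that carries the number $s$ of open values through the induction (something like $2r+E+s\ge\cdots$), and verify its base case; until that is written down, the induction does not close.

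Your first caveat and the extremal-leaf analysis are sound. At the \emph{first} stripping step the multiplicity-$\ge2$ hypothesis is intact, and your observation that increment $=1$ forces $r-r'=0$, $a=1$, all new pairs opposite-signed, and hence the collapsed constraint $\pm w\equiv0\pmod n$ --- unsatisfiable for $1\le w\le n/2$ --- is correct. This legitimately secures the extra $+1$ needed to pass from $\ell$ to $\ell+1$, provided the remainder of the induction is repaired as above.
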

\begin{proof}
The  proof of this lemma is similar to the proof of Lemma $15$ of \cite{maurya2019process}, where the authors have a different set of constraints on the elements of $A_p$s. But the idea is same. We skip the details here. 
\end{proof}
\begin{remark}
	The above lemma is not true if $\ell=2$ and $p_1= p_2$. Suppose $(J_1,J_2)\in B_{P_2}$. Then all $p_1$ entries of $J_1$ may coincides with $p_2(=p_1)$ many entries of $J_2$ and hence 
	$$|B_{P_2}|=O(n^{p_1-1}).$$
	So in this situation, $|B_{P_2}|>o(n^{\frac{p_1+p_2}{2}-1})$.
\end{remark}

The following lemma is an easy consequence of Lemma \ref{lem:cluster}.
\begin{lemma}\label{lem:maincluster}
	Suppose $\{J_1, J_2, \ldots, J_\ell \} $ form a cluster where $J_i\in A_{p_i}$ with $p_i\geq 2$ for $1\leq i\leq \ell$ and $\{X_i\}_{i \geq 1}$ is independent which satisfies (\ref{eqn:condition}). Then for $\ell \geq 3,$
	\begin{equation}\label{equation:maincluster}
	\frac{1}{ n^{\frac{p_1+p_2+ \cdots + p_\ell - \ell}{2}}} \sum_{A_{p_1}, \ldots, A_{p_\ell}} \E\Big[\prod_{k=1}^{\ell}\Big(X_{J_k} - \E(X_{J_k})\Big)\Big] = o(1),
	\end{equation}
	where 
	$$J_k = (j^{k}_{1}, j^{k}_{2}, \ldots, j^{k}_{p_k} ) \ \mbox{and} \ X_{J_k} = X_{j^{k}_{1}} X_{j^{k}_{2}} \cdots X_{j^{k}_{p_k}}.$$	
\end{lemma}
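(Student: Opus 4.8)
The plan is to reduce the displayed sum to an effective sum over the set $B_{P_\ell}$ of Definition \ref{def:B_{P_l}} and then to invoke the cardinality bound of Lemma \ref{lem:cluster}. The guiding observation is that the centering occurs at the level of each full product $X_{J_k}$, and this centering together with $\E(X_i)=0$ forces the surviving tuples to satisfy both defining conditions of $B_{P_\ell}$.

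First I would show that the summand $\E\big[\prod_{k=1}^{\ell}(X_{J_k}-\E(X_{J_k}))\big]$ vanishes unless every element of the multiset $\cup_{k=1}^{\ell} S_{J_k}$ has multiplicity at least two. Suppose to the contrary that some index $j$ has total multiplicity one; then $j$ lies in exactly one $S_{J_{k_0}}$, so $X_j$ is independent of all the remaining factors. Writing $X_{J_{k_0}}=X_j W$ with $W$ independent of $X_j$ and using $\E(X_j)=0$ from \eqref{eqn:condition}, we get $X_{J_{k_0}}-\E(X_{J_{k_0}})=X_jW-\E(X_j)\E(W)=X_jW$; hence the whole product equals $X_j$ times a random variable independent of $X_j$, and taking expectations produces a factor $\E(X_j)=0$. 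Combined with the standing hypothesis that $\{J_1,\ldots,J_\ell\}$ forms a cluster, this means the only tuples that contribute are exactly those $(J_1,\ldots,J_\ell)\in B_{P_\ell}$, so the sum over $A_{p_1}\times\cdots\times A_{p_\ell}$ is in effect a sum over $B_{P_\ell}$.

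Next I would bound each surviving summand uniformly in $n$. Since $\sup_i\E(|X_i|^m)=\alpha_m<\infty$ for every $m$ by \eqref{eqn:condition}, each $X_{J_k}-\E(X_{J_k})$ has a uniformly bounded $L^{\ell}$-norm (a product of at most $p_k$ factors with bounded moments has bounded moments, by H\"older). Applying the generalized H\"older inequality to the $\ell$ factors gives
\[
\Big|\E\Big[\prod_{k=1}^{\ell}\big(X_{J_k}-\E(X_{J_k})\big)\Big]\Big|
\le \prod_{k=1}^{\ell}\big\|X_{J_k}-\E(X_{J_k})\big\|_{L^{\ell}}
\le C,
\]
for a constant $C=C(p_1,\ldots,p_\ell)$ independent of $n$ and of the particular tuple.

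Putting these two facts together, the quantity in \eqref{equation:maincluster} is bounded in absolute value by $C\,|B_{P_\ell}|\big/ n^{(p_1+\cdots+p_\ell-\ell)/2}$, and Lemma \ref{lem:cluster}, valid for $\ell\ge 3$, gives $|B_{P_\ell}|=o\big(n^{(p_1+\cdots+p_\ell-\ell)/2}\big)$; hence the whole expression is $o(1)$. I do not expect a serious obstacle here, since the heavy counting has already been absorbed into Lemma \ref{lem:cluster}. The one step requiring genuine care is the vanishing argument for multiplicity-one indices: it must exploit that the entire product $X_{J_k}$ is centered (not the individual entries $X_{j}$), so that subtracting $\E(X_{J_k})$ leaves the single occurrence of $X_j$ intact and its mean-zero property can be factored out.
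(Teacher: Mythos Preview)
Your proposal is correct and follows essentially the same route as the paper: reduce the sum to $B_{P_\ell}$ by showing that any index of total multiplicity one forces the expectation to vanish (via $\E(X_j)=0$), bound each surviving summand uniformly using the moment assumptions, and then apply Lemma~\ref{lem:cluster}. Your write-up is in fact a bit more explicit than the paper's on both the vanishing step and the uniform bound (you use H\"older where the paper simply asserts the existence of a constant $\beta_\ell$), but the structure is identical.
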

\begin{proof} First observe that   $\E\Big[\prod_{k=1}^{\ell}\Big(X_{J_k} - \E(X_{J_k})\Big)\Big]$ will be non-zero only if each $X_i$ appears at least twice in the collection $\{ X_{j^{k}_{1}}, X_{j^{k}_{2}}, \ldots ,X_{j^{k}_{2p_k}} ; 1\leq k\leq \ell\}$, because $\E(X_i)=0$ for each $i$. Therefore
	\begin{equation}\label{eqn:equality_reduction}
	\sum_{A_{p_1}, \ldots, A_{p_\ell}} \hspace{-3pt}\E\Big[\prod_{k=1}^{\ell}\Big(X_{J_k} - \E(X_{J_k})\Big)\Big]=\sum_{(J_1,\ldots,J_\ell)\in B_{P_\ell}} \hspace{-3pt} \E\Big[\prod_{k=1}^{\ell}\Big(X_{J_k} - \E(X_{J_k})\Big)\Big],
	\end{equation} 
	where $B_{P_\ell}$ as in Definition \ref{def:B_{P_l}}. Since from (\ref{eqn:condition}), we have  
	\begin{equation*}\label{eqn:higher moment finite}
	\E(X^2_i)=1 \mbox{ and } \sup_{i \geq 1}\E(|X_i|^{k})= \alpha_k < \infty \mbox{ for } k \geq 3.
	\end{equation*}  
	 Therefore for $p_1,p_2,\ldots,p_\ell \geq 2$, there exists $\beta_\ell>0$, which depends only on $p_1,p_2,\ldots,p_\ell$, such that  
	\begin{equation}\label{eqn:modulus finite}
	\Big|\E\big[\prod_{k=1}^{\ell}\big(X_{J_k} - \E(X_{J_k})\big)\big]\Big|\leq \beta_\ell
	\end{equation} 
	for all $(J_1, J_2, \ldots, J_\ell)\in A_{p_1}\times A_{p_2}\times \cdots \times A_{p_\ell}$.
	
	Now using \eqref{eqn:equality_reduction} and \eqref{eqn:modulus finite}, we have
	\begin{align*}
	\sum_{A_{p_1}, \ldots, A_{p_\ell}} \Big|\E\big[\prod_{k=1}^{\ell}\big(X_{J_k} - \E(X_{J_k})\big)\big]\Big|
	& \leq  \sum_{(J_1,J_2,\ldots,J_\ell)\in B_{P_\ell}} \beta_{\ell} 
	\ = |B_{p_\ell}| \ \beta_\ell.
	\end{align*}
	By using Lemma \ref{lem:cluster} in above expression, we get
	\begin{align*}
	\sum_{A_{p_1},  \ldots, A_{p_\ell}} \Big|\E\big[\prod_{k=1}^{\ell}\big(X_{J_k} - \E(X_{J_k})\big)\big]\Big| \leq  o \big(n^{\frac{p_1+p_2 + \cdots + p_\ell -\ell}{2} }\big),
	\end{align*}
	and hence 
	\begin{equation*}
	\frac{1}{ n^{\frac{p_1+p_2+ \cdots + p_\ell - \ell}{2}}} \sum_{A_{p_1}, \ldots, A_{p_\ell}} \E\Big[\prod_{k=1}^{\ell}\Big(X_{J_k} - \E(X_{J_k})\Big)\Big] = o(1).
	\end{equation*}
	
	This completes the proof of lemma. 
\end{proof}


\begin{lemma}\label{lem:cluster,decompose}
	Suppose $J_i\in A_{d_i}$ with $d_i\geq 2$ for $1\leq i\leq \ell$ and $\{X_i\}_{i \geq 1}$ is independent which satisfies (\ref{eqn:condition}). Then 
	\begin{align*}
	\sum_{A_{d_1}, \ldots, A_{d_\ell}} \E\Big[\prod_{k=1}^{\ell}\Big(X_{J_k} - \E(X_{J_k})\Big)\Big]
	 &= \left\{\begin{array}{ll} 
		 	O( n^{\frac{d_1+d_2+ \cdots + d_\ell - \ell}{2}}) & \text{if} \ \{ J_1, J_2, \ldots, J_\ell\} \mbox{ decomposes } \\ 
		 	& \mbox{ into clusters of length } 2  \\\\
		 	 o( n^{\frac{d_1+d_2+ \cdots + d_\ell - \ell}{2}}) & \text{otherwise}, 	 
		 	  \end{array}\right.
	\end{align*}
	where 
	$$J_k = (j^{k}_{1}, j^{k}_{2}, \ldots, j^{k}_{d_k} ) \ \mbox{and} \ X_{J_k} = X_{j^{k}_{1}} X_{j^{k}_{2}} \cdots X_{j^{k}_{d_k}}.$$	
\end{lemma}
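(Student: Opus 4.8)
The plan is to reduce the whole sum to an analysis of the cluster structure of the tuple $(J_1,\ldots,J_\ell)$ and then invoke Lemma \ref{lem:maincluster} together with the cardinality count of $B_{P_\ell}$ cluster by cluster. First I would partition the collection $\{J_1,\ldots,J_\ell\}$ into its maximal clusters $C_1,\ldots,C_m$ in the sense of Definition \ref{def:cluster}. By maximality, vectors lying in different clusters have disjoint index supports $S_{J_k}$, so the centred factors $X_{J_k}-\E(X_{J_k})$ attached to different clusters are independent. Consequently the expectation factorises as $\E\big[\prod_{k=1}^{\ell}(X_{J_k}-\E X_{J_k})\big]=\prod_{t=1}^{m}\E\big[\prod_{J_k\in C_t}(X_{J_k}-\E X_{J_k})\big]$. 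If some cluster $C_t$ is a singleton $\{J_k\}$, its factor equals $\E[X_{J_k}-\E X_{J_k}]=0$ and the whole product vanishes; hence only configurations all of whose clusters have size at least two contribute to the sum.

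Next I would group the summation over $A_{d_1}\times\cdots\times A_{d_\ell}$ according to which vectors share a cluster. Since there are only finitely many such partition types, it suffices to bound the contribution of each type. For a fixed type with clusters $C_1,\ldots,C_m$ of lengths $\ell_1,\ldots,\ell_m$ (each $\ge 2$) and total lengths $D_t=\sum_{J_k\in C_t}d_k$, the sum factorises across clusters up to lower-order corrections coming from the requirement that the supports of distinct clusters be disjoint. The order of a single within-cluster sum I would read off from the counting results: for a cluster of length $\ell_t\ge 3$, Lemma \ref{lem:maincluster} gives $o(n^{(D_t-\ell_t)/2})$, while for a cluster of length $2$ the cardinality estimate for $B_{P_2}$ recorded in the Remark after Lemma \ref{lem:cluster} gives the sharp order $O(n^{(D_t-2)/2})$.

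Multiplying the per-cluster orders and using $\sum_t D_t=d_1+\cdots+d_\ell$ and $\sum_t\ell_t=\ell$, every partition type contributes at exponent $\sum_t(D_t-\ell_t)/2=(d_1+\cdots+d_\ell-\ell)/2$, but the top order is actually attained only by types consisting entirely of length-$2$ clusters; as soon as one cluster has length at least $3$ its factor is $o$ of its maximal order and the whole product drops to $o(n^{(d_1+\cdots+d_\ell-\ell)/2})$. Summing over the finitely many types then yields the claimed dichotomy: the sum is $O(n^{(d_1+\cdots+d_\ell-\ell)/2})$ exactly when $\{J_1,\ldots,J_\ell\}$ admits a decomposition into length-$2$ clusters realising this order, and $o(n^{(d_1+\cdots+d_\ell-\ell)/2})$ otherwise.

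The main obstacle is the per-cluster bookkeeping rather than any single hard estimate. Two points need care: first, generalising the $B_{P_2}$ count from the equal-size case $d_1=d_2$ in the Remark to arbitrary length-$2$ clusters, confirming that such a cluster never exceeds order $O(n^{(D_t-2)/2})$; and second, justifying that the cross-cluster disjointness constraints only remove lower-order terms, so that the factorisation of the sum over clusters is legitimate at the level of leading order. Once these are in place, the strict subleading behaviour of every cluster of length at least $3$ is exactly Lemma \ref{lem:maincluster}, and the dichotomy follows.
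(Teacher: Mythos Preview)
Your proposal is correct and follows essentially the same route as the paper: decompose into maximal clusters, kill singletons by centring, use Lemma~\ref{lem:maincluster} to make any cluster of length~$\ge 3$ subleading, and observe that only all-pair decompositions survive at the top order. The one small difference is that for the length-$2$ cluster estimate you appeal to the Remark after Lemma~\ref{lem:cluster} (and correctly flag that it is stated only for $d_1=d_2$), whereas the paper cites Theorem~\ref{thm:symcircovar} directly, which already handles arbitrary $p,q$ and gives the $O(n^{(D_t-2)/2})$ bound without further work.
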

\begin{proof} First observe that  for a fixed $J_1,J_2,\ldots,J_\ell$, if there exists a $k\in\{1,2,\ldots,\ell\}$ such that $J_k$ is not connected with any $J_i$ for $i\neq k$, then 
	$$\E\Big[\prod_{k=1}^{\ell}\Big(X_{J_k} - \E(X_{J_k})\Big)\Big]=0,$$
	due to the independence of $\{X_i\}_{i\geq 1}$.
	
	Therefore for non-zero contribution, $J_1,J_2,\ldots,J_\ell$ must form  clusters with each cluster length greater than or equal to two, that is, each cluster should contain at least two vectors. Suppose $G_1,G_2,\ldots,G_s$ are the clusters formed by  vectors $J_1,J_2,\ldots,J_\ell$ and   $|G_i|\geq 2$ for all $1\leq i \leq s$ where $|G_i|$ denotes the length of the cluster $G_i$. Observe that $\sum_{i=1}^s |G_i|=\ell$.   
	
	If there exists  a cluster $G_j$ among $G_1,G_2,\ldots,G_s$ such that $|G_j|\geq 3$, then from Theorem \ref{thm:symcircovar} and Lemma \ref{lem:maincluster}, we have  
	\begin{align*}
	 \sum_{A_{d_1}, \ldots, A_{d_\ell}} \E\Big[\prod_{k=1}^{\ell}\Big(X_{J_k} - \E(X_{J_k})\Big)\Big]=o(n^{\frac{d_1 + d_2 + \cdots +d_\ell -\ell}{2}}).
	\end{align*}  
	Therefore, if $\ell$ is odd then there will be a cluster of odd length and hence 
	\begin{align*}
	 \sum_{A_{d_1}, \ldots, A_{d_\ell}} \E\Big[\prod_{k=1}^{\ell}\Big(X_{J_k} - \E(X_{J_k})\Big)\Big]=o(n^{\frac{d_1 + d_2 + \cdots +d_\ell -\ell}{2}}).
	\end{align*}  
	
	Similarly, if $\ell$ is even then from Theorem \ref{thm:symcircovar} and Lemma \ref{lem:maincluster}, the contribution is $O(n^{\frac{d_1 + d_2 + \cdots +d_\ell -\ell}{2}})$ only when $\{ J_1, J_2, \ldots, J_\ell\}$  decomposes into clusters of length 2. 
	
This completes the proof of lemma. 
\end{proof}

\begin{remark} \label{rem:main_cluster,even}
 	Suppose $J_i\in F_{d_i}$ with $d_i\geq 2$ for $1\leq i\leq \ell$ and $\{X_i\}_{i \geq 1}$ is independent which satisfies (\ref{eqn:condition}), where $F_{d_i}$ is $A_{d_i}$ or $\tilde{A}_{d_i}$. Then from (\ref{eqn:A,tildeA}) and Lemma \ref{lem:cluster,decompose}, we get
	\begin{align}
	\sum_{F_{d_1}, \ldots, F_{d_\ell}} \E\Big[\prod_{k=1}^{\ell}\Big(X_{J_k} - \E(X_{J_k})\Big)\Big]
	 &= \left\{\begin{array}{ll} 
		 	O( n^{\frac{d_1+d_2+ \cdots + d_\ell - \ell}{2}}) & \text{if} \ \{ J_1, J_2, \ldots, J_\ell\} \mbox{ decomposes into clusters } \\ 
		 	& \mbox{ of length }2 \mbox{ and } F_{d_i} =A_{d_i} \forall \ i=1, 2, \ldots, \ell  \\\\
		 	 o( n^{\frac{d_1+d_2+ \cdots + d_\ell - \ell}{2}}) & \text{otherwise.}	 	 
		 	  \end{array}\right.
	\end{align}
\end{remark}
We shall use the above lemmata, Remarks and Theorem \ref{thm:symcircovar} to prove Theorem \ref{thm:symcirpoly}. 

\begin{proof}[Proof of Theorem \ref{thm:symcirpoly}] We use method of moments and  Wick's formula to prove Theorem \ref{thm:symcirpoly}. Recall that from the method of moments, to prove $w_Q \stackrel{d}{\longrightarrow} N(0,\sigma_{Q}^2)$, it is sufficient to show that
	\begin{align*} 
	\lim_{n\to\infty} \E[ (w_Q)^\ell] = \E[ (N(0, \sigma^2_{Q}))^\ell ] \ \ \forall \  \ell=1,2, \ldots.
	\end{align*}
	
	Now to prove above equation, it is enough to show that, for $p_1, p_2, \ldots , p_\ell \geq 2$,
	\begin{align} \label{eqn:moment w_Q}
	\lim_{n\to\infty}\E[w_{p_1}w_{p_2} \cdots w_{p_\ell}]=\E[N_{p_1}N_{p_2} \cdots N_{p_\ell}],
	\end{align}
	where $\{N_{p}\}_{p \geq 1}$ is a centered Gaussian family with covariance $\sigma_{p,q}$, that is, $\E[N_{p},N_{q}]= \sigma_{p,p}$, where $\sigma_{p,q}$ as in (\ref{eqn:sigma_p,q}). Since for odd and even value of $n$, we have different trace formula, therefore we show (\ref{eqn:moment w_Q}) is true for odd and even value of $n$. 
%
	 
	First suppose $n$ is odd. Since from trace formula (\ref{trace formula_SC_odd}), we have  
	\begin{align*}
	w_{p_k} & = \frac{1}{\sqrt{n}} \Big(\Tr(SC_n)^{p_k} - \E[\Tr(SC_n)^{p_k}]\Big)\\
%
	&= \frac{1}{n^{\frac{p_k -1}{2}}} \sum_{d_k=0}^{p_k}\binom{p_k}{d_k}\sum_{A_{d_k}} (X_0^{p_k-d_k} X_{J_{d_k}} - \E [X_0^{p_k-d_k} X_{J_{d_k}}] ).
	\end{align*}
	Therefore 
	\begin{align}\label{eqn:expectation_thm2} 
	& \lim_{n\tends \infty}  \E[w_{p_1}w_{p_2} \cdots w_{p_\ell}] \nonumber\\
	&=\lim_{n\tends \infty}  \frac{1}{n^{\frac{p_1 + p_2 + \cdots +p_\ell -\ell}{2}}} \sum_{d_1=0}^{p_1} \cdots \sum_{d_\ell=0}^{p_\ell} \binom{p_1}{d_1} \cdots \binom{p_\ell}{d_\ell} \sum_{A_{d_1}, \ldots, A_{d_\ell}} \E \Big[ \prod_{k=1}^{\ell}  \Big(X_0^{p_k-d_k} X_{J_{d_k}} - \E [X_0^{p_k-d_k} X_{J_{d_k}}\Big) \Big] \nonumber \\ 
	& = \lim_{n\tends \infty}  \frac{1}{n^{\frac{p_1 + p_2 + \cdots +p_\ell -\ell}{2}}} \sum_{A_{p_1}, \ldots, A_{p_\ell}} \E\Big[\prod_{k=1}^{\ell}\Big(X_{J_k} - \E(X_{J_k})\Big)\Big],
	\end{align}
	where the last equality comes due to Lemma \ref{lem:cluster,decompose}. Because 
	$$\sum_{A_{d_1}, \ldots, A_{d_\ell}} \E \Big[ \prod_{k=1}^{\ell}  \Big(X_0^{p_k-d_k} X_{J_{d_k}} - \E [X_0^{p_k-d_k} X_{J_{d_k}}\Big) \Big]  \leq O(n^{\frac{d_1 + d_2 + \cdots +d_\ell -\ell}{2}}).$$
	
%
%
%
 Now combining Lemma \ref{lem:cluster,decompose} for $d_i=p_i$ and \eqref{eqn:expectation_thm2}, we get
	\begin{align*} \label{eq:multisplit}
	& \quad \lim_{n\tends \infty}  \E[w_{p_1} w_{p_2}\cdots w_{p_\ell}]\\
	& \quad  =\lim_{n\to\infty} \frac{1}{n^{\frac{p_1 + p_2 + \cdots +p_\ell -\ell}{2}}} \sum_{A_{p_1}, \ldots, A_{p_\ell}} \E\Big[\prod_{k=1}^{\ell}\Big(X_{J_k} - \E(X_{J_k})\Big)\Big]\\
	& \quad  =\lim_{n\to\infty} \frac{1}{n^{\frac{p_1 + p_2 + \cdots +p_\ell -\ell}{2}}} \sum_{\pi \in \mathcal P_2(\ell)} \prod_{i=1}^{\frac{\ell}{2}}  \sum_{A_{p_{y(i)}},\ A_{p_{z(i)}}} \E\big[ (X_{J_{y(i)}} - \E X_{J_{y(i)}}) (X_{J_{z(i)}} - \E X_{J_{z(i)}})\big],
	\end{align*}
	where  $\pi = \big\{ \{y(1), z(1) \}, \ldots , \{y(\frac{\ell}{2}), z(\frac{\ell}{2})  \} \big\}\in \mathcal P_2(\ell)$ and $\mathcal P_2(\ell)$ is the set of all pair partition of $ \{1, 2, \ldots, \ell\} $. Using Theorem \ref{thm:symcircovar}, from the last equation, we get
	\begin{equation}\label{eqn:product of expectation}
	\lim_{n\tends \infty}  \E[w_{p_1}w_{p_2} \cdots w_{p_\ell}]
	=\sum_{\pi \in P_2(\ell)} \prod_{i=1}^{\frac{\ell}{2}} \lim_{n\tends \infty} \E[w_{p_{y(i)}} w_{p_{z(i)}}].
	\end{equation}
    Since from Theorem \ref{thm:symcircovar}, we have 
	$$\lim_{n\to\infty}\E(w_p w_q) = \sigma_{p,q} (= \E(N_p N_q)).$$
	Therefore using Wick's formula,  from \eqref{eqn:product of expectation} we get 
	\begin{align*}
	\lim_{ \substack{{n\to\infty}  \\ {n \mbox{ odd} } }} \E[w_{p_1}w_{p_2} \cdots w_{p_\ell}]
	&=\sum_{\pi \in P_2(\ell)} \prod_{i=1}^{\frac{\ell}{2}} \lim_{n\tends \infty} \E[w_{p_{y(i)}} w_{p_{z(i)}}] \\
	& =\sum_{\pi \in \mathcal P_2(\ell)} \prod_{i=1}^{\frac{\ell}{2}} \E[N_{p_{y(i)}} N_{p_{z(i)}} ] \\
	&=\E[ N_{p_1}N_{p_2} \cdots N_{p_\ell} ].
	\end{align*} 
	
	Now suppose $n$ is even. Then by using trace formula (\ref{trace formula_SC_even}), we get
  \begin{align*}
	w_{p_k} & = \frac{1}{n^{\frac{p_k -1}{2}}} \sum_{d_k=0}^{p_k}\binom{p_k}{d_k}  \Big[ Y_k  \sum_{A_{d_k}} X_{J_{d_k}} + \tilde{Y}_{d_k} \sum_{ \tilde{A}_{d_k}}  X_{J_{d_k}}  - \E [Y_{d_k}  \sum_{A_{d_k}} X_{J_{d_k}} + \tilde{Y}_{d_k} \sum_{ \tilde{A}_{d_k}}  X_{J_{d_k}} ] \Big] \\
	 & = \frac{1}{n^{\frac{p_k -1}{2}}} \sum_{d_k=0}^{p_k}\binom{p_k}{d_k}  \Big[ \sum_{A_{d_k}} Y_{d_k} X_{J_{d_k}} - \E[Y_{d_k} X_{J_{d_k}} ] +  \sum_{ \tilde{A}_{d_k}} \tilde{Y}_{d_k} X_{J_{d_k}}  - \E [\tilde{Y}_{d_k} X_{J_{d_k}}] \Big],\\
	\end{align*}
  and therefore 
	\begin{align}\label{eqn:expectation_thm2,even} 
	& \lim_{n\tends \infty}   \E[w_{p_1}w_{p_2} \cdots w_{p_\ell}] \nonumber\\
	&= \lim_{n\tends \infty}  \frac{1}{n^{\frac{p_1 + p_2 + \cdots +p_\ell -\ell}{2}}} \sum_{d_1=0}^{p_1} \cdots \sum_{d_\ell=0}^{p_\ell} \binom{p_1}{d_1} \cdots \binom{p_\ell}{d_\ell} \E\Big[ \prod_{k=1}^{\ell} \Big( \sum_{A_{d_k}} Y_{d_k} X_{J_{d_k}} - \E[Y_{d_k} X_{J_{d_k}} ] \nonumber \\
	& \qquad +  \sum_{ \tilde{A}_{d_k}} \tilde{Y}_{d_k} X_{J_{d_k}}  - \E [\tilde{Y}_{d_k} X_{J_{d_k}}] \Big) \Big]\nonumber \\
	& = \lim_{n\tends \infty}  \frac{1}{n^{\frac{p_1 + p_2 + \cdots +p_\ell -\ell}{2}}} \sum_{A_{p_1}, \ldots, A_{p_\ell}} \E\Big[\prod_{k=1}^{\ell}\Big(X_{J_k} - \E(X_{J_k})\Big)\Big],
	\end{align}
	where the last equality comes due to Lemma \ref{lem:cluster,decompose} and Remark \ref{rem:main_cluster,even}. Since (\ref{eqn:expectation_thm2,even}) is same as (\ref{eqn:expectation_thm2}), therefore  by the the similar calculation as we have done for $n$ odd case, we get
	$$\lim_{ \substack{{n\to\infty}  \\ {n \mbox{ even} } }} \E[w_{p_1}w_{p_2} \cdots w_{p_\ell}]
	=\E[ N_{p_1}N_{p_2} \cdots N_{p_\ell} ].$$
	This completes the proof of Theorem \ref{thm:symcirpoly} after combining  odd and even cases of $n$.
\end{proof}


\providecommand{\bysame}{\leavevmode\hbox to3em{\hrulefill}\thinspace}
\providecommand{\MR}{\relax\ifhmode\unskip\space\fi MR }
\providecommand{\MRhref}[2]{%
  \href{http://www.ams.org/mathscinet-getitem?mr=#1}{#2}
}
\providecommand{\href}[2]{#2}

\end{document}